\theoremstyle{plain}
\newtheorem{theorem}{Theorem}[section]
\newtheorem{proposition}[theorem]{Proposition}
\newtheorem{corollary}[theorem]{Corollary}
\newtheorem{lemma}[theorem]{Lemma}
\theoremstyle{definition}
\newtheorem{definition}[theorem]{Definition}
\newtheorem{example}[theorem]{Example}
\newtheorem{remark}[theorem]{Remark}
\newtheorem{question}[theorem]{Question}
\newcommand{\integers}{\mathbb{Z}}
\newcommand{\posintegers}{\mathbb{N}}
\newcommand{\End}{\operatorname{End}}
\newcommand{\diag}{\operatorname{diag}}
\newcommand{\Id}{\operatorname{Id}}
\newcommand{\Nil}{\operatorname{Nil}}
\newcommand{\U}{\operatorname{U}}
\newcommand{\soc}{\operatorname{soc}}
\title{Generalizing $\pi$-regular rings}
\date{May 11, 2014}
\author{Peter Danchev\\Department of Mathematics\\University of Plovdiv, Bulgaria\\{\small e-mail: pvdanchev@yahoo.com}\and
Janez \v{S}ter\\Department of Mathematics\\University of Ljubljana, Slovenia\\{\small e-mail: janez.ster@fmf.uni-lj.si}}
\begin{document}

\maketitle
\footnotetext{2010 Mathematics Subject Classification: 16E50, 16U70, 16S70, 16U99.
Key words: weakly nil clean ring, $\pi$-regular ring, strongly $\pi$-regular ring, weakly clean ring, PI-ring.}

\begin{abstract}
We introduce the class {\it weakly nil clean} rings, as rings $R$ in which for every $a\in R$ there exist
an idempotent $e$ and a nilpotent $q$ such that $a-e-q\in eRa$. Every weakly nil clean ring is exchange.
Weakly nil clean rings contain $\pi$-regular rings as a proper subclass,
and these two classes coincide in the case of central idempotents. Every weakly nil clean ring of bounded index and every weakly nil clean PI-ring is strongly $\pi$-regular.
The center of a weakly nil clean ring is strongly $\pi$-regular, and consequently,
every weakly nil clean ring is a corner of a clean ring. These results extend Azumaya \cite{azumaya}, McCoy \cite{mccoy}, and the second author \cite{ster3} to a wider class of rings
and provide partial answers to some open questions in \cite{diesl} and \cite{ster3}. Some other properties are also studied and several examples are given.
\end{abstract}

\section{Introduction}

Let $R$ be an associative ring with unity.
We say that $R$ is {\it $\pi$-regular} if for every $a\in R$ there exists a positive integer $n$ such that $a^n\in a^nRa^n$.
The very first mention of this notion dates back to 1939, when it was introduced by McCoy in \cite{mccoy}, as a generalization of von Neumann regular rings.
Examples of $\pi$-regular rings, besides von Neumann regular ones, are Artinian rings and perfect rings.
While in McCoy's paper, most attention was given to $\pi$-regular rings that are commutative, the study of the general case was continued through the next decades by other authors.
Azumaya, Tominaga, and Yamada (\cite{azumaya}, \cite{tominaga}, \cite{tominagayamada}) studied $\pi$-regular rings of bounded index of nilpotence.
Menal \cite{menal} studied $\pi$-regular rings with primitive factors Artinian.
Many other results can be found in the literature, e.g.~\cite{fuchsrangaswamy}, \cite{fishersnider}, \cite{hirano}, \cite{badawi}, \cite{kimlee} etc.

Along with this theory, a similar notion of {\it strongly $\pi$-regular} rings was introduced and studied. A ring $R$ is called strongly $\pi$-regular
if for every $a\in R$ there exists a positive integer $n$ such that $a^n\in a^{n+1}R$. The notion was first considered by Kaplansky \cite{kaplansky},
and later by Azumaya \cite{azumaya} who called such rings {\it right $\pi$-regular}. Dischinger \cite{dischinger} proved that the notion is left-right symmetric.
Consequently, every strongly $\pi$-regular ring is $\pi$-regular, and the notions coincide in the case when the ring is commutative, or more general,
when idempotents in the ring are central.
Von Neumann regular rings in general are not strongly $\pi$-regular, while Artinian rings and perfect rings are, as well as algebraic algebras over a field.
Azumaya \cite{azumaya} proved that every $\pi$-regular ring with bounded index of nilpotence is strongly $\pi$-regular.
Ara \cite{ara4} proved that every strongly $\pi$-regular ring has stable range one. A module over a ring satisfies the Fitting lemma if and only if its endomorphism ring
is strongly $\pi$-regular (see \cite{nicholson3}). For more information about $\pi$-regular and strongly $\pi$-regular rings, we refer the reader to \cite{tuganbaev}.

We call a ring $R$ an {\it exchange} ring if for every $a\in R$ there exists an idempotent $e\in Ra$ such that $1-e\in R(1-a)$,
and $R$ is called {\it clean} if every element in $R$ can be written as a sum of an idempotent and an invertible element.
Clean rings are exchange, but the converse does not hold (see \cite{nicholson}, \cite{camilloyu}).

Stock \cite{stock} proved that all $\pi$-regular rings are exchange, and Burgess and Menal \cite{burgessmenal}, and Nicholson \cite{nicholson3} proved that every strongly $\pi$-regular
ring is clean, but none of these two implications is reversible. In \cite{ster}, the second author introduced {\it weakly clean} rings,
as rings $R$ in which for every $a\in R$ there exist an idempotent $e$ and a unit $u$ such that $a-e-u\in(1-e)Ra$. As shown in \cite{ster} and \cite{ster3},
every weakly clean ring is exchange and every clean ring is weakly clean, but none of these two implications is reversible. Furthermore, every $\pi$-regular ring
is weakly clean \cite{ster3}.

In this paper we introduce a class of rings which is a proper subclass of weakly clean rings as defined in \cite{ster}, but still contains all $\pi$-regular rings.
These rings, which will be called {\it weakly nil clean} rings, are defined as rings $R$ in which for every $a\in R$ there exist
an idempotent $e$ and a nilpotent $q$ such that $a-e-q\in eRa$. Examples of such rings include, besides all $\pi$-regular rings,
also semiperfect rings with nil Jacobson radical (which in general need not be $\pi$-regular),
finite direct products of $\pi$-regular rings, and upper triangular matrix rings over $\pi$-regular rings.

We show that weakly nil clean rings are closely related to $\pi$-regularity. For example, in the case of central idempotents both notions coincide.
Moreover, we generalize Azumaya \cite{azumaya} by showing that every weakly nil clean ring of bounded
index of nilpotence is strongly $\pi$-regular. In particular, weakly nil clean PI-rings are strongly $\pi$-regular.
We also prove that the center of a weakly nil clean ring is (strongly) $\pi$-regular, which extends the analogous result of McCoy \cite{mccoy} for $\pi$-regular rings.
This also implies that every weakly nil clean ring is a corner of a clean ring, and thus provides a partial answer
to \cite[Question 3.10]{ster3}, which asks if this actually holds for all weakly clean rings (in the sense of \cite{ster}).

Throughout this text, rings are assumed to be unital and non-commutative. Since our definition actually needs no unity in the ring,
we will sometimes also refer to rings that do not necessarily have unity, and call them {\it non-unital} rings.
The notations $\U(R)$, $J(R)$, $\Id(R)$, $\Nil(R)$ will stand for the set of units, the Jacobson radical, the set of idempotents and the set
of nilpotents of $R$, respectively. We denote by $M_n(R)$ the ring of all $n\times n$ matrices over $R$. The letters $\integers$, $\integers_n$
and $\posintegers$ stand for the set of integers, integers modulo $n$, and positive integers, respectively.

\section{Basic properties and examples}

Our central tool is the following notion:

\begin{definition}
\label{definicija}
Let $R$ be a non-unital ring. An element $a\in R$ is \textit{weakly nil clean} in $R$ if there exist $e\in\Id(R)$ and $q\in\Nil(R)$ such that $a-e-q\in eRa$.
A non-unital ring $R$ is \textit{weakly nil clean} if every element $a\in R$ is weakly nil clean in $R$.
\end{definition}

\begin{remark}
Our notion of weakly nil clean rings should not be confused with the one arising from the notion of weakly clean rings as defined by Ahn and Anderson in \cite{ahnanderson}.
There, weakly clean rings are defined as rings in which every element is a sum or a difference of a unit and an idempotent.
Motivated by this definition, an analogous definition of weakly nil clean rings can be introduced,
as rings in which every element is a sum or a difference of a nilpotent and an idempotent
(see \cite{danchevmcgovern} and \cite{danchevdiesl}). Note that these definitions are unrelated to our definition.
Despite the possible confusion, we decided to stick with our chosen name, to keep the name as much self-explanatory as possible.
Our definition of weakly nil cleanness is built upon weakly clean rings as defined in \cite{ster}, and nil clean rings defined in Diesl \cite{diesl}.
\end{remark}

In the following we provide some basic examples. We will give more examples later when we have more tools at our disposal.
In the next section we will show that weakly nil clean rings include all $\pi$-regular rings.

\begin{example}
(1) Every idempotent and nilpotent in a ring is weakly nil clean. In particular, Boolean rings and nil (non-unital) rings are weakly nil clean.

(2) {\it Nil clean rings} are rings in which every element can be written as a sum of an idempotent and a nilpotent (see Diesl \cite{diesl}).
Clearly, every nil clean ring is weakly nil clean.

(3) Every invertible element in a ring is weakly nil clean. Indeed, if $a$ is invertible in $R$, then we can write $a=1+0+1\cdot (1-a^{-1})\cdot a$, where $1\in\Id(R)$
and $0\in\Nil(R)$. In particular, this shows that every division ring is weakly nil clean.
Note that division rings are never nil clean in the sense of Diesl \cite{diesl}, except when $R=\integers_2$.
\end{example}

\begin{proposition}
\label{osnove}
\phantom{a}\hfill
\begin{enumerate}
\item
Every weakly nil clean ring is weakly clean in the sense of \v{S}ter \cite{ster3}. In particular, every weakly nil clean ring is exchange.
\item
Weakly nil clean rings are closed under homomorphic images, direct limits and finite direct products.
\end{enumerate}
\end{proposition}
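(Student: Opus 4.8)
The plan is to dispatch (ii) by routine functoriality and to concentrate the real effort on (i).

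\textbf{Part (ii).} I would reduce all three closure properties to a single observation: the image of a weakly nil clean element under any ring homomorphism is again weakly nil clean. Indeed, if $a-e-q=era$ in $R$ and $\phi\colon R\to S$ is a ring homomorphism, then $\phi(a)-\phi(e)-\phi(q)=\phi(e)\phi(r)\phi(a)\in\phi(e)S\phi(a)$, with $\phi(e)\in\Id(S)$ and $\phi(q)^n=\phi(q^n)=0$, so $\phi(q)\in\Nil(S)$. For a homomorphic image this is enough, since every element of $S=R/I$ lifts to $R$; for a direct limit $R=\varinjlim R_i$ it is enough, since every element of $R$ is the image of an element of some $R_i$ under the (ring) structure map $R_i\to R$, and each $R_i$ is weakly nil clean. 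For a finite product $R=R_1\times\cdots\times R_k$ I would argue coordinatewise: if $a_j-e_j-q_j=e_jr_ja_j$ in $R_j$, then $e:=(e_1,\dots,e_k)\in\Id(R)$, $q:=(q_1,\dots,q_k)$ is nilpotent of index at most $\max_j(\text{index of }q_j)$, and $a-e-q=era\in eRa$.

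\textbf{Part (i).} Given $a\in R$, fix weakly nil clean witnesses $e\in\Id(R)$, $q\in\Nil(R)$ with $a-e-q=era$ for some $r\in R$. Left-multiplying by $1-e$ yields the key identity $(1-e)a=(1-e)q$, and since $q$ is nilpotent we have $1-q\in\U(R)$. The goal is to produce an idempotent $f$ and a unit $u$ with $a-f-u\in(1-f)Ra$. I would try $f=1-e$, so that $(1-f)Ra=eRa$. A short computation using $(1-e)a=(1-e)q$ shows that, for \emph{every} $s\in R$, the element $u:=e(1+s)a-(1-e)(1-q)$ satisfies $a-f-u=-esa\in eRa=(1-f)Ra$; thus the membership half of the weakly clean condition holds no matter how $s$ is chosen. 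Moreover $u$ has a prescribed $(1-e)$-part, namely $(1-e)u=-(1-e)(1-q)$ independent of $s$, while its $e$-part $eu=e(1+s)a$ runs over all of $eRa$ as $s$ varies.

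The hard part will be to choose $s$ so that this $u$ is a unit: one must complete the free $e$-row of $u$ so that $u$ becomes invertible, exploiting that the $(1-e)$-row of $u$ is built from the unit $1-q$ (this is where nilpotency of $q$ enters) together with the Peirce decomposition relative to $e$. I would attempt this either by a direct block argument in that Peirce decomposition, or — more likely, given the reference — by invoking a characterization of weakly clean rings available in \cite{ster} and \cite{ster3}. Once $R$ has been shown to be weakly clean, the final assertion is immediate, since every weakly clean ring is exchange by \cite{ster} (see also \cite{ster3}).
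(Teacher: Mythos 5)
Your part (ii) is correct and is essentially the ``straightforward'' argument the paper omits: weak nil cleanness of an element is preserved by any ring homomorphism (idempotents go to idempotents, nilpotents to nilpotents, and $eRa$ maps into $\phi(e)S\phi(a)$), and the coordinatewise argument for finite products is fine. No issues there.

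Part (i), however, has a genuine gap. You correctly set $f=1-e$, verify the membership $a-f-u\in(1-f)Ra=eRa$ for your candidate $u=e(1+s)a-(1-e)(1-q)$, and observe that the $e$-component of $u$ can be steered freely. But the entire content of the weakly clean property (in the unit-based form you are targeting, $a-f-u\in(1-f)Ra$ with $u\in\U(R)$) is that $u$ be a \emph{unit}, and you explicitly defer that step: you never exhibit an $s$ making $u$ invertible, nor do you prove one exists. The membership half you did verify holds ``no matter how $s$ is chosen,'' which is precisely a sign that it carries no information; without the unit, nothing has been proved. The paper sidesteps this entirely: its proof of (i) is a one-line citation of an \emph{equivalent characterization} of weak cleanness, namely \cite[Definition 4.1]{ster3}, which is formally weaker than Definition \ref{definicija} (roughly, it asks for a decomposition of the same shape $a-e-q\in eRa$ but with a condition on $q$ weaker than nilpotency), so that the implication is immediate by inspection. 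The nontrivial work --- showing that this weaker characterization is equivalent to the unit-based definition --- is a theorem of \cite{ster3} and is not redone here. Your closing remark that you would ``more likely'' fall back on such a characterization is indeed the right move and is the paper's actual proof; but as written, your argument for (i) stops exactly where the difficulty begins, and completing your direct block-matrix route would amount to reproving that equivalence from \cite{ster3}.
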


\begin{proof}
To see (i), note that one of the equivalent characterizations of the weakly clean property is \cite[Definition 4.1]{ster3}, which is clearly weaker than our
Definition \ref{definicija}. The proof of (ii) is straightforward, so we omit it.
\end{proof}

The following gives another characterization of weakly nil clean rings which is analogous to the equivalence (i)$\Leftrightarrow$(ii) of \cite[Definition 2.1]{ster3}.
This characterization also gives a direct proof that weakly nil clean rings are exchange.

\begin{proposition}
\label{prva}
A ring $R$ is weakly nil clean if and only if for every $a\in R$ there exist an idempotent $e\in Ra$ and a nilpotent $q\in R$ such that $1-e=(1-e)(1+q)(1-a)$.
\end{proposition}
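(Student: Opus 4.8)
The plan is to prove the two implications separately, adapting the proof of the analogous equivalence for weakly clean rings, \cite[Definition~2.1]{ster3}, and inserting the extra bookkeeping needed to keep the auxiliary unit of the form $1+(\text{nilpotent})$.

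Assume first that $R$ is weakly nil clean, and fix $a\in R$. I would apply weak nil cleanness to a suitable element (either $a$ or $1-a$): one gets $e\in\Id(R)$, $q\in\Nil(R)$ and $r\in R$ with $a-e-q=era$ (say). Left-multiplying by $1-e$ and by $e$ and using $(1-e)e=e(1-e)=0$ produces the two Peirce-type identities
\[
(1-e)a=(1-e)q,\qquad e(1-r)a=e(1+q).
\]
From the first one, $(1-e)(1-a)=(1-e)(1-q)$; since $q$ is nilpotent, $1\pm q$ are units and $(1-q)^{-1}=1+q+q^{2}+\cdots$ is again of the shape $1+(\text{nilpotent})$, and a rearrangement of this identity yields $1-e'=(1-e')(1+q')(1-a)$ for appropriate $e',q'$ built from $e,q$. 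The remaining task is to get the idempotent into $Ra$: the second identity together with $e+q=(1-er)a\in Ra$ shows that $e$ differs from an element of $Ra$ by a nilpotent, so, since weakly nil clean rings are exchange (Proposition~\ref{osnove}(i)), I would use the exchange-ring argument of \cite{ster3} to replace $e$ by an honest idempotent $\bar e\in Ra$ and push the correction through the two identities, thereby producing the required nilpotent.

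For the converse, let $e\in Ra$ be idempotent and $q\in\Nil(R)$ with $1-e=(1-e)(1+q)(1-a)$. Expanding the right-hand side gives $(1-e)(1+q)a=(1-e)q$, hence $(1-e)a=(1-e)q(1-a)$; in particular $e\in Ra$ and $1-e=[(1-e)(1+q)](1-a)\in R(1-a)$, which already yields a direct proof that $R$ is exchange. Reversing the construction of the previous paragraph, and taking the nilpotent of the defining equation to be an explicit polynomial expression in $q$ (so that it genuinely is nilpotent), one obtains $f\in\Id(R)$ and $p\in\Nil(R)$ with $a-f-p\in fRa$, which is Definition~\ref{definicija}.

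The step I expect to be the main obstacle is keeping the two constraints simultaneously satisfied: the idempotent must lie in $Ra$, \emph{and} the unit must be exactly of the form $1+(\text{nilpotent})$ rather than an arbitrary unit. The idempotent that Definition~\ref{definicija} hands over need not be in $Ra$, so a correction is forced, and one has to verify that this correction does not spoil the shape of the unit — note that a product of two units of the form $1+(\text{nilpotent})$ is in general \emph{not} of that form. What makes it work is that every unit occurring in the computation is a polynomial in the \emph{single} nilpotent $q$ (only $1\pm q$ and their inverses appear), and within the class of such units $1+(\text{nilpotent})$ is closed under multiplication; this is precisely the refinement one must graft onto the weakly clean argument of \cite{ster3}.
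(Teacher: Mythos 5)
Your overall strategy (Peirce identities followed by conjugation by a unit of the form $1\pm q$) is the right one, but the forward direction has a genuine gap, located exactly where you flag ``the main obstacle.'' From $a=e+q+era$ you get two facts: $e+q=(1-er)a\in Ra$, and $(1-e)a=(1-e)q$, i.e.\ $(1-e)(1-a)=(1-e)(1-q)$. The first fact does put a conjugate of $e$ into $Ra$, but the relevant conjugating unit is $u=1+q$: since $eu=e(e+q)\in Ra$, one gets $u^{-1}eu\in Ra$. The second fact yields the displayed identity, but for a \emph{different} conjugate: setting $v=1-q$, one has $(1-e)v=(1-e)(1-a)$, hence $1-v^{-1}ev=(1-v^{-1}ev)\,v^{-1}(1-a)$. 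There is no single idempotent doing both jobs: from $e+eq=e(e+q)\in Ra$ one gets $eq\equiv-e\pmod{Ra}$, so $ev=e-eq\equiv 2e\pmod{Ra}$, and $v^{-1}ev$ need not lie in $Ra$. Your proposed repair --- invoking the exchange property to replace $e$ by an idempotent of $Ra$ --- would bring in a unit that is \emph{not} of the form $1+(\text{nilpotent})$ and not a polynomial in $q$, which defeats the closure argument your last paragraph relies on.

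The paper's fix is a sign trick you did not consider: apply Definition~\ref{definicija} to $-a$ (not to $a$ or $1-a$). Writing $-a=e+q+exa$ and $u=1+q$, one still has $e+q=-(1+ex)a\in Ra$, so $u^{-1}eu=u^{-1}e(e+q)\in Ra$; but now $(1-e)a=-(1-e)q$, so $(1-e)(1-a)=(1-e)(1+q)=(1-e)u$, and the \emph{same} unit $u$ serves both purposes: $u^{-1}(1-e)u=u^{-1}(1-e)u\cdot u^{-1}(1-a)$ with $u^{-1}-1\in\Nil(R)$. Taking $e'=u^{-1}eu$ and $1+q'=u^{-1}$ finishes the forward direction with no correction step and no appeal to the exchange property. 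The converse is then indeed ``similar,'' as you suggest, but you should run it with the same sign convention in mind.
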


\begin{proof}
Let us prove only the forward direction, the converse is proved similarly.
Let $a\in R$, and, by assumption, write $-a=e+q+exa$ where $e\in\Id(R)$, $q\in\Nil(R)$ and $x\in R$. Denote $f=1-e$ and $u=1+q\in\U(R)$.
We have $e+u-1=-a-exa\in Ra$, hence $u^{-1}eu=u^{-1}e(e+u-1)$ is an idempotent in $Ra$. Furthermore, we have $fa=f(-e-q-exa)=-fq$, hence
$fu=f(1-a)$ and thus $u^{-1}fu=u^{-1}fu\cdot u^{-1}\cdot(1-a)$. The proof is complete by observing that $u^{-1}-1\in\Nil(R)$.
\end{proof}

The following fundamental question remains open.

\begin{question}
Is the weakly nil clean property left-right symmetric?
\end{question}

Note that for weakly clean rings in the sense of \cite{ster3}, the analogous question has a positive answer (see \cite[Remark 2.4]{ster3}).
However, the methods that can be used in a direct proof of this fact, seemingly do not work in our case of weakly nil clean rings.

The following is an analogue of Diesl's result \cite[Proposition 3.15]{diesl} for nil clean rings.

\begin{proposition}
\label{nilideal}
Let $R$ be a ring with a nil ideal $I$. Then $R$ is weakly nil clean if and only if $R/I$ is so.
\end{proposition}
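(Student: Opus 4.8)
The plan is to prove the two implications separately. The forward direction is immediate: $R/I$ is a homomorphic image of $R$, hence weakly nil clean by Proposition~\ref{osnove}(ii); note this direction does not use that $I$ is nil.

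The content is in the converse. Fix $a\in R$ and write $r\mapsto\bar r$ for the quotient map $R\to R/I$. Since $R/I$ is weakly nil clean, there are an idempotent $\bar e$, a nilpotent $\bar q$, and an element $\bar x$ in $R/I$ with $\bar a=\bar e+\bar q+\bar e\bar x\bar a$. I would then lift this data to $R$: because $I$ is nil, idempotents lift modulo $I$, so choose an idempotent $e\in R$ whose image is $\bar e$, and choose any preimage $x\in R$ of $\bar x$. Instead of lifting $\bar q$ directly, set $q:=a-e-exa$. Then $a-e-q=exa\in eRa$ holds in $R$ exactly, so the whole proof reduces to checking that $q$ is nilpotent.

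For that I would invoke the standard fact that a preimage of a nilpotent element modulo a nil ideal is again nilpotent: the image of $q$ in $R/I$ equals $\bar a-\bar e-\bar e\bar x\bar a=\bar q$, which is nilpotent, so $q^m\in I$ for some $m$; since $I$ is nil, $q^m$ is nilpotent, whence $q$ is nilpotent. This shows $a$ is weakly nil clean in $R$, and as $a$ was arbitrary, $R$ is weakly nil clean.

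The one point I would flag is the exactness of the membership $a-e-q\in eRa$. If one lifts $\bar q$ and $\bar x$ independently, one is left with an additive error term lying in $I$, and absorbing it into the nilpotent part is not for free: in a noncommutative ring the sum of two nilpotents need not be nilpotent. The remedy is precisely the one above — do not lift $\bar q$ at all, but read off $q$ from the already chosen lifts $e$ and $x$, so that $a-e-q=exa$ by construction; the argument then rests only on the two elementary lifting facts for a nil ideal (idempotents lift, and preimages of nilpotents are nilpotent).
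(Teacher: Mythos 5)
Your proof is correct and follows essentially the same route as the paper: lift the idempotent $e$ and the element $x$, define $q:=a-e-exa$, and observe that $q$ is nilpotent because its image $\bar q$ modulo the nil ideal $I$ is nilpotent. The cautionary remark about not lifting $\bar q$ independently is exactly the point the paper's argument is built around.
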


\begin{proof}
We only need to prove the backward direction. So, assume that $R/I$ is weakly nil clean. We write $\overline{R}=R/I$ and
$\overline{x}=x+I\in\overline{R}$ for any $x\in R$. Take $a\in R$. By assumption,
we can write $\overline{a}=\epsilon+\eta+\epsilon\chi\overline{a}$ for some $\epsilon\in\Id(\overline{R})$, $\eta\in\Nil(\overline{R})$ and $\chi\in\overline{R}$.
Since idempotents lift modulo every nil ideal, there exists $e\in\Id(R)$ with $\overline{e}=\epsilon$. Taking $x\in R$ with $\overline{x}=\chi$, we have
$\overline{a-e-exa}=\eta\in\Nil(\overline{R})$, so that $\overline{(a-e-exa)^n}=0$ for some $n\ge 1$, or equivalently, $(a-e-exa)^n\in I$.
Since $I$ is nil, it follows that $a-e-exa$ is a nilpotent, as desired.
\end{proof}

\begin{proposition}
\label{radikal}
A ring $R$ is weakly nil clean if and only if $R/J(R)$ is weakly nil clean and $J(R)$ is nil.
\end{proposition}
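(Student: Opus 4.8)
The plan is to treat the two implications separately; the backward one is essentially immediate. If $R/J(R)$ is weakly nil clean and $J(R)$ is nil, then $J(R)$ is a nil ideal of $R$ whose quotient is weakly nil clean, so Proposition~\ref{nilideal} directly yields that $R$ is weakly nil clean.

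For the forward direction, assume $R$ is weakly nil clean. That $R/J(R)$ is weakly nil clean is immediate from Proposition~\ref{osnove}(ii), since $R/J(R)$ is a homomorphic image of $R$. The substantive part is to show that $J(R)$ is nil, i.e.\ $J(R)\subseteq\Nil(R)$. Here I would fix $a\in J(R)$ and apply the weakly nil clean hypothesis to obtain $e\in\Id(R)$, $q\in\Nil(R)$ and $x\in R$ with $a-e-exa=q$, equivalently $e=a-q-exa$.

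Now I would reduce modulo $J(R)$: writing $\overline{R}=R/J(R)$, we have $\overline{a}=0$, hence $\overline{e}=-\overline{q}$, so $\overline{e}$ is simultaneously an idempotent and a nilpotent of $\overline{R}$, and therefore $\overline{e}=0$; that is, $e\in J(R)$. Since $J(R)$ contains no nonzero idempotent (if $e^2=e\in J(R)$, then $1-e\in\U(R)$ while $(1-e)e=0$, forcing $e=0$), we conclude $e=0$, and substituting back into $a-e-exa=q$ gives $a=q\in\Nil(R)$. Hence $J(R)$ is nil, which completes the argument. The only genuine obstacle is recognizing that reduction modulo $J(R)$ collapses the idempotent part of the decomposition; once $e$ is forced to be $0$, everything else is routine bookkeeping.
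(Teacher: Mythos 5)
Your proposal is correct and follows essentially the same route as the paper: both directions reduce to Proposition \ref{nilideal}, and the forward direction shows $e\in J(R)$ by observing that $e+J(R)=-q+J(R)$ is simultaneously an idempotent and a nilpotent in $R/J(R)$, forcing $e=0$ and $a=q$. Your extra justification that $J(R)$ contains no nonzero idempotents is a detail the paper leaves implicit, but the argument is the same.
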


\begin{proof}
According to Proposition \ref{nilideal}, we only need to prove that $J(R)$ is nil whenever $R$ is weakly nil clean.
So, take $a\in J(R)$ and, by assumption, write $a=e+q+exa$ with $e\in\Id(R)$, $q\in\Nil(R)$ and $x\in R$.
Since $e+q=a-exa\in J(R)$, we have $e+J(R)=-q+J(R)$, so that this element is simultaneously an idempotent and a nilpotent in $R/J(R)$.
This forces $e\in J(R)$ and hence $e=0$, giving that $a=q$ is a nilpotent.
\end{proof}

The above results allow us to give some more examples.

\begin{example}
(1) Every ideal of a weakly nil clean ring is a weakly nil clean (non-unital) ring. More generally, an ideal $I$ of a ring $R$ is weakly nil clean if and only if
every $a\in I$ is weakly nil clean in $R$. This can be proved in a similar way as Proposition \ref{radikal}.

(2) Proposition \ref{nilideal} allows us to construct new weakly nil clean rings from known ones. For example, if $R$ is a weakly nil clean ring, then
the upper triangular matrix ring $T_n(R)$ is weakly nil clean by Proposition \ref{nilideal} (and Proposition \ref{osnove}).
Another example, if $R$ is a weakly nil clean ring and $M$ is an $R$-bimodule, then the {\it bimodule extension} of $R$ by $M$
(i.e., the direct sum $R\oplus M$, equipped with multiplication $(a,x)(b,y)=(ab,ay+xb)$) is weakly nil clean ring by Proposition \ref{nilideal}.

(3) Proposition \ref{radikal} provides a rich source of exchange (or clean) rings that are not weakly nil clean. For example, by this proposition,
a local ring is weakly nil clean if and only if its Jacobson radical is nil. Note that local rings are always clean.

(5) The ring $R(Q,L)$ in \cite[Example 1.7]{nicholson} is an example of an exchange ring with zero Jacobson radical which is not weakly nil clean.

(4) If $R$ is weakly nil clean then $R[X]/(X^n)$ is weakly nil clean for every $n$, which is another easy consequence of Proposition \ref{nilideal}.
\end{example}

\section{Relations to $\pi$-regular rings}

The following technical lemma will prove useful.

\begin{lemma}
\label{mocnalema}
Let $R$ be a ring and $a\in R$. If there exists $e=e^2=1-f\in Ra$ such that $faf$ is weakly nil clean in $fRf$,
then $a$ is weakly nil clean in $R$.
\end{lemma}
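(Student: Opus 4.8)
The plan is to work inside the Peirce decomposition of $R$ with respect to the orthogonal idempotents $e$ and $f:=1-e$, writing every element of $R$ as a $2\times 2$ matrix with entries in $eRe,eRf,fRe,fRf$; in particular $a$ becomes $\begin{pmatrix} a_{11} & a_{12}\\ a_{21} & a_{22}\end{pmatrix}$, where $a_{ij}$ are the Peirce components and $a_{22}=faf$. Since $e\in Ra$, fix $r\in R$ with $ra=e$; in matrix form this reads $ra=\begin{pmatrix} e & 0\\ 0 & 0\end{pmatrix}$, because the idempotent $e$ is itself $\begin{pmatrix} e & 0\\ 0 & 0\end{pmatrix}$. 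The hypothesis that $faf$ is weakly nil clean in $fRf$ supplies an idempotent $g\in fRf$, a nilpotent $n\in\Nil(fRf)$, and some $w\in fRf$ with $a_{22}-g-n=gwa_{22}$.

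The decomposition of $a$ will be built from $E:=e+g$. As $g\in fRf$ we have $eg=ge=0$, so $E$ is an idempotent of $R$ with matrix $\begin{pmatrix} e & 0\\ 0 & g\end{pmatrix}$. For the $eRa$-part I would take $y:=(e-r+w)a\in Ra$. Using $ra=e$, a direct computation of Peirce components gives $y=\begin{pmatrix} a_{11}-e & a_{12}\\ wa_{21} & wa_{22}\end{pmatrix}$, hence $Ey=\begin{pmatrix} a_{11}-e & a_{12}\\ gwa_{21} & gwa_{22}\end{pmatrix}$. I then \emph{define} $Q:=a-E-Ey$, so that tautologically $a-E-Q=Ey=E(e-r+w)a\in ERa$, and only the nilpotence of $Q$ remains to be checked.

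Computing $Q=(a-E)-Ey$, the first row of $a-E$ is exactly $(a_{11}-e,\ a_{12})$, which is cancelled by the first row of $Ey$; the $(2,1)$-entry becomes $a_{21}-gwa_{21}\in fRe$; and the $(2,2)$-entry becomes $(a_{22}-g)-gwa_{22}=n$ by the choice of $w$. Thus $Q=\begin{pmatrix} 0 & 0\\ a_{21}-gwa_{21} & n\end{pmatrix}$, a matrix with vanishing first row whose $(2,2)$-entry is nilpotent; writing $c=a_{21}-gwa_{21}$, an immediate induction gives $Q^{k}=\begin{pmatrix} 0 & 0\\ n^{k-1}c & n^{k}\end{pmatrix}$ for $k\ge 1$, so $Q^{N+1}=0$ whenever $n^{N}=0$. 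Hence $Q\in\Nil(R)$, $E\in\Id(R)$, and $a-E-Q\in ERa$, i.e. $a$ is weakly nil clean in $R$.

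The only real obstacle is arranging the choice of $y$ so that $Q$ comes out nilpotent. One must absorb the \emph{entire} first row $(a_{11}-e,\ a_{12})$ of $a-E$ into the $eRa$-term — this is where $e\in Ra$ enters, through $r$ — and simultaneously, via the $wa$-summand together with the relation $a_{22}-g-n=gwa_{22}$, reduce the $(2,2)$-corner of $Q$ to the nilpotent $n$. Naive choices (e.g. $Q=a-E-Ea$, or cancelling only the $eRe$-part) leave cross terms $a_{12}\in eRf$ and $a_{21}\in fRe$ that can multiply to a non-nilpotent element of $eRe$; the point is to force $Q$ into the block-triangular shape above, where nilpotence is automatic. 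Everything else — existence of $r$, that $E=e+g$ is idempotent, that $y\in Ra$, and the Peirce-component bookkeeping — is routine.
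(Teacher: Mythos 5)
Your proof is correct and takes essentially the same route as the paper's: both use the idempotent $E=e+g$ and a nilpotent lying in $fRe+\Nil(fRf)$ (the paper takes $\mu=q+fae$, you take $n+(f-gw)fae$), with nilpotence verified by the same two-term expansion $Q^k=n^{k-1}c+n^k$. The only difference is bookkeeping: the paper shows $a-E-\mu\in ER\cap Ra=ERa$ by checking the two memberships separately, while you exhibit the remainder directly as $E(e-r+w)a$.
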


\begin{proof}
We imitate the proof of \cite[Lemma 2.6]{ster3}.
Let $a\in R$, $e=e^2=1-f\in Ra$, and suppose that $faf$ is weakly nil clean in $fRf$, i.e.,
\begin{equation}
\label{mlenacba}
faf=g+q+gxfaf
\end{equation}
for some $g\in\textnormal{Id}(fRf)$, $q\in\Nil(fRf)$, and $x\in fRf$.
Defining $\mu=q+fae$, we can directly verify that $\mu^n=q^n+q^{n-1}fae$ for every $n\ge 1$, so that $\mu$ is a nilpotent in $R$.
Let $\pi=1-(f-g)=e+g\in\Id(R)$. From (\ref{mlenacba}) we have $(f-g)faf=(f-g)q$, hence
$(1-\pi)a=(f-g)a=(f-g)fae+(f-g)faf=(f-g)fae+(f-g)q=(f-g)\mu=(1-\pi)\mu$, which gives $(1-\pi)(a-\mu)=0$. Thus, $a-\pi-\mu\in\pi R$. Furthermore,
since $e\in Ra$ and $af=a-ae\in Ra$, we have
$\pi+\mu=e+g+q+fae=e+fae+faf-gxfaf\in Ra$. Therefore $a-\pi-\mu\in \pi R\cap Ra=\pi Ra$, as desired.
\end{proof}

In \cite{ster3}, it is proved that every $\pi$-regular ring is weakly clean (in the sense of \cite{ster3}).
The following proposition strengthens this result.
We say that an element $a$ of a ring $R$ is {\it $\pi$-regular} if there exist $n\in\posintegers$ and $r\in R$ such that $a^nra^n=a^n$.

\begin{proposition}
\label{piregularni}
Every $\pi$-regular element is weakly nil clean. In particular, every $\pi$-regular ring is weakly nil clean.
\end{proposition}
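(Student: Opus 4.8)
The plan is to invoke Lemma~\ref{mocnalema}. Given a $\pi$-regular element $a$, I will exhibit an idempotent $e = 1-f \in Ra$ for which $faf$ is in fact \emph{nilpotent} in the corner ring $fRf$; since every nilpotent is weakly nil clean (take the idempotent to be $0$, as in the examples above), $faf$ is then weakly nil clean in $fRf$, and Lemma~\ref{mocnalema} delivers that $a$ is weakly nil clean in $R$. The ``in particular'' clause is then immediate, as every element of a $\pi$-regular ring is a $\pi$-regular element.

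To carry this out, I would fix $n \in \posintegers$ and $r \in R$ with $a^n r a^n = a^n$, and set $e := r a^n$ and $f := 1 - e$. First come the routine checks: $e^2 = r(a^n r a^n) = ra^n = e$, so $e \in \Id(R)$, and $e = r a^n \in Ra^n \subseteq Ra$ since $n \ge 1$. The defining relation then gives $a^n f = a^n - a^n r a^n = 0$, hence also $a^{n+1} f = a(a^n f) = 0$. The crucial point is that $a$ is ``lower triangular'' with respect to $e$ and $f$: indeed $eaf = r a^{n+1} f = 0$, so that $af = (e+f)af = faf$. Feeding this identity back in, an easy induction on $k$ gives $(af)^k = a^k f$ for all $k \ge 1$, the inductive step being $(af)^{k+1} = (a^k f)(af) = a^k (faf) = a^k (af) = a^{k+1} f$. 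Taking $k = n$ yields $(faf)^n = (af)^n = a^n f = 0$, so $faf$ is nilpotent, as wanted, and the rest follows from Lemma~\ref{mocnalema}.

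I do not anticipate a real obstacle here: the argument is essentially forced once the right idempotent is chosen, and the only point that genuinely matters is to take $e = ra^n$ rather than $e = a^n r$. The former lies in $Ra$, which is exactly the hypothesis of Lemma~\ref{mocnalema}, and simultaneously makes $eaf$ vanish, collapsing $faf$ to $af$ and trivializing the nilpotency computation; the latter would only give $e \in aR$, and the lemma would not apply directly.
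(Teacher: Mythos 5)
Your proof is correct and follows exactly the route of the paper: take $e=ra^n\in Ra$, $f=1-e$, show $faf$ is nilpotent in $fRf$, and invoke Lemma~\ref{mocnalema} together with the fact that nilpotents are weakly nil clean. The only difference is that the paper outsources the nilpotency of $faf$ to a citation, whereas you carry out the (correct) computation $eaf=0$, $faf=af$, $(af)^n=a^nf=0$ explicitly.
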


\begin{proof}
Let $a\in R$ be a $\pi$-regular element. Choose $n\ge 1$ and $r\in R$ such that $a^n=a^nra^n$. Then $e=ra^n$ is an idempotent in $Ra$.
Setting $f=1-e$, we see, as in \cite[Example 2.7 (1)]{ster3}, that $faf$ is a nilpotent in $fRf$.
Since every nilpotent element is weakly nil clean, the conclusion follows by Lemma \ref{mocnalema}.
\end{proof}

This result provides many examples of weakly nil clean rings.

\begin{example}
(1) A ring $R$ is {\it (von Neumann) regular} if for every $a\in R$ we have $a\in aRa$. Every regular ring is $\pi$-regular and hence weakly nil clean.
Moreover, Artinian rings, (right or left) perfect rings, and algebraic algebras over a field are weakly nil clean because they are $\pi$-regular.

(2) A finite direct product of $\pi$-regular rings is weakly nil clean. It is not known to the authors if it is actually always $\pi$-regular, but it seems that in general,
this is not the case.

(3) An infinite direct product of weakly nil clean rings need not be weakly nil clean. For example, $\integers_2\times\integers_4\times\integers_8\times\ldots$
is not weakly nil clean since it has non-nil Jacobson radical.

(4) If $R$ is a $\pi$-regular ring, then $T_n(R)$ is weakly nil clean by Proposition \ref{nilideal}. It seems to be very unlikely (though it is not known to the authors)
that this ring be $\pi$-regular in general.

(5) A semiperfect ring is weakly nil clean if and only if the Jacobson radical of the ring is nil. This follows from Proposition \ref{nilideal} and Proposition \ref{piregularni}.

(6) It can be easily seen that Proposition \ref{piregularni} holds also for non-unital rings.
From that we deduce that for any ring $R$, the right socle of $R$, $\soc(R_R)$, is a weakly nil clean non-unital ring.
Indeed, in $\soc(R_R)$ there is a nil ideal $I$ such that $\soc(R_R)/I$ is regular (see, for example, \cite[Proof of Lemma 1.2]{baccella}).
\end{example}

Based on the example of Rowen \cite{rowen2}, we can give an example of a weakly nil clean ring which is not $\pi$-regular.

\begin{example}
\label{rowenex}
The following construction is due to Ced\'o and Rowen \cite[Example 1]{cedorowen}.
Let $F$ be a field, and $F(X)$ the field of fractions of the polynomial ring $F[X]$. Extend $\{X^n|\,n\in\integers\}$ to a basis $\mathcal{B}$ of $F(X)$ over $F$.
Let $T$ be the free product of $F(X)$ with the (unital) free algebra on two elements $F\langle A,B\rangle$. Let
$V=\{BwA|\,w\in\mathcal{B}\setminus\{X^n|\,n<0\}\}\subseteq T$, and $P$ be the ideal of $T$ generated by $A^2$, $B^2$, $AwA$, $AwB$, $BwB$, for all $w\in\mathcal{B}\setminus\{1\}$,
$V$, and $\bigcup_{k=1}^\infty\{(BX^{-1}A)^{n_k},(BX^{-2}A)^{n_k},\ldots,(BX^{-k}A)^{n_k}\}$, where $n_k>2^k+1$. Set $R=T/P$.
Then, by \cite[Theorem 5]{cedorowen}, $R$ is a local ring whose Jacobson radical is locally nilpotent.
In fact, $J(R)=R\overline{A}R+R\overline{B}R$, where $\overline{A},\overline{B}$ are homomorphic images of $A,B$ in $R$, and $R/J(R)\cong F(X)$.
Set $S=M_2(R)$. Then $S$ is semiperfect and the Jacobson radical $J(S)=M_2(J(R))$ is nil since $J(R)$ is locally nilpotent. Hence $S$ is weakly nil clean.
But this ring is not $\pi$-regular by \cite[Example 4]{kimlee}.
\end{example}

We can also find a weakly nil clean ring with trivial Jacobson radical which is not $\pi$-regular:

\begin{example}
Let $R$ be as in Example \ref{rowenex}. We embed $R$ in a $\pi$-regular ring $S$ with $J(S)=0$.
For example, considering $R$ as an algebra over $F$, we can embed $R$ in the endomorphism ring $S=\End_F(R)$ (which is von Neumann regular).
Now, the ring
$$T=\{(a_1,\ldots,a_n,a,a,\ldots)|\,n\in\posintegers,\,a_i\in S,\,a\in R\}$$
is weakly nil clean since $S$ and $R$ are weakly nil clean,
and $T$ is not $\pi$-regular since it has a homomorphic image $R$. Note that also $J(T)=0$ by direct computation.
\end{example}

In \cite[Proposition 4.9]{ster3} it is proved that if $R$ is a ring with an ideal $I$ such that $I$ and $R/I$ are both $\pi$-regular, then
$R$ is weakly clean (in the sense of \cite{ster3}). We can prove that, under these assumptions, $R$ is even weakly nil clean:

\begin{proposition}
\label{expiregular}
Let $R$ be a ring with an ideal $I$ such that $I$ and $R/I$ are both $\pi$-regular. Then $R$ is weakly nil clean.
\end{proposition}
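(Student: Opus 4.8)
The plan is to run essentially the same corner argument that proves \cite[Proposition 4.9]{ster3} for weak cleanness, but to finish with Proposition \ref{piregularni} instead of its weakly clean analogue, so that ``nilpotent'' plays the role of ``unit'' throughout. Concretely, given $a\in R$, I want to produce an idempotent $e\in Ra$ for which $faf$ is weakly nil clean in the corner ring $fRf$, where $f=1-e$, and then invoke Lemma \ref{mocnalema}.

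First I reduce modulo $I$. Since $\overline R:=R/I$ is $\pi$-regular, $\overline a$ is a $\pi$-regular element, so there are $n\ge 1$ and $r\in R$ with $a^n-a^nra^n\in I$. Exactly as in the proof of Proposition \ref{piregularni}, the element $x:=ra^n$ lies in $Ra^n\subseteq Ra$, satisfies $x-x^2\in I$, and its image $\overline x$ is an idempotent of $\overline R$ contained in $\overline R\,\overline a$ for which $\overline f\,\overline a\,\overline f$ is nilpotent in $\overline f\,\overline R\,\overline f$, where $\overline f=1-\overline x$. Because $I$ is $\pi$-regular it is an exchange ideal, so (by the idempotent-lifting techniques of \cite{nicholson} and \cite{ster3}) $\overline x$ can be lifted to a genuine idempotent $e\in xRx\subseteq Ra$ with $\overline e=\overline x$. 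I expect this lifting of $\overline x$ into $Ra$ — not merely lifting idempotents modulo $I$, but keeping the lift inside $Ra$ — to be the only point requiring real care, and it is exactly where $\pi$-regularity of $I$ (as opposed to $I$ merely being a ring) is used; the rest is bookkeeping.

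Now put $f=1-e$. The projection $R\to\overline R$ restricts to a surjection $fRf\to\overline f\,\overline R\,\overline f$ with kernel $fIf=I\cap fRf$, and it carries $faf$ to the nilpotent element $\overline f\,\overline a\,\overline f$; hence $(faf)^m\in fIf$ for some $m\ge 1$. The ring $fIf$ is itself $\pi$-regular (as a non-unital ring): any $y\in fIf$ lies in the $\pi$-regular ring $I$, so $y^p=y^pwy^p$ for some $p\ge 1$ and $w\in I$, and replacing $w$ by $fwf\in fIf$ (using $y=fyf$) shows $y$ is $\pi$-regular inside $fIf$. Therefore $(faf)^m$ is a $\pi$-regular element of $fIf$, hence of $fRf$, and passing to a further power shows that $faf$ itself is a $\pi$-regular element of the unital ring $fRf$. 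By Proposition \ref{piregularni}, $faf$ is weakly nil clean in $fRf$, so Lemma \ref{mocnalema} (applicable since $e\in Ra$) gives that $a$ is weakly nil clean in $R$. As $a$ was arbitrary, $R$ is weakly nil clean. The main obstacle is the idempotent-lifting step in the second paragraph; everything after it is routine manipulation with corners and powers.
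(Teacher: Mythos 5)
Your argument is correct and is essentially the route the paper takes: the paper's proof simply defers to the proof of \cite[Proposition 4.9]{ster3} --- which carries out exactly the reduction you describe, producing an idempotent $e\in Ra$ for which $(1-e)a(1-e)$ is $\pi$-regular --- and then finishes, as you do, by passing to the corner $(1-e)R(1-e)$ and applying Proposition \ref{piregularni} together with Lemma \ref{mocnalema}. The one step you leave to the literature (strongly lifting $\overline{x}$ to an idempotent inside $Ra$ modulo the exchange ideal $I$) is precisely the content the paper likewise outsources to \cite{ster3}, so your write-up is, if anything, more self-contained than the original.
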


\begin{proof}
Only a minor correction is needed in the proof of \cite[Proposition 4.9]{ster3}, in the following way. In the last paragraph of the proof it is observed
that $(1-q)a(1-q)$ is $\pi$-regular in $R$. This clearly implies that $(1-q)a(1-q)$ is also $\pi$-regular in $(1-q)R(1-q)$.
Therefore, by Proposition \ref{piregularni}, $(1-q)a(1-q)$ is weakly nil clean in $(1-q)R(1-q)$, and by Lemma \ref{mocnalema}, $a$ is weakly nil clean in $R$.
\end{proof}

\begin{remark}
Note that, in general, the ring $R$ in Proposition \ref{expiregular} need not be $\pi$-regular. Consider, for instance, Example \ref{rowenex}.
\end{remark}

In the next few propositions we show that, for a wide range of examples, Proposition \ref{piregularni} also holds in the converse direction.

Recall that a ring $R$ is \textit{abelian} if all idempotents in $R$ are central.
In the next proposition we prove that every abelian weakly nil clean ring is strongly $\pi$-regular (and thus $\pi$-regular).
Recall that one of the equivalent definitions of strongly $\pi$-regular rings is that for every $a\in R$ there exists
$e\in\Id(R)$ such that $ea=ae$, $ae$ is a unit in $eRe$ and $a(1-e)$ is a nilpotent in $(1-e)R(1-e)$
(see \cite[equivalence (1)$\Leftrightarrow$(4) on p.~3589]{nicholson3}).

\begin{proposition}
\label{abelovost}
An abelian ring is weakly nil clean if and only if it is strongly $\pi$-regular.
\end{proposition}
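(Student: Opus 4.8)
The plan is to treat the two implications separately. The ``if'' part is immediate and uses only the earlier results: a strongly $\pi$-regular ring is $\pi$-regular (as recalled in the introduction), hence weakly nil clean by Proposition~\ref{piregularni}; the abelian hypothesis is not needed for this direction. All the content therefore lies in the ``only if'' part, and the idea is to read off, from a weakly nil clean decomposition of a given element, exactly the idempotent demanded by the characterization of strong $\pi$-regularity quoted just before the proposition.

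So let $R$ be abelian and weakly nil clean, and fix $a\in R$; write $a=e+q+exa$ with $e\in\Id(R)$, $q\in\Nil(R)$ and $x\in R$. Since $R$ is abelian, $e$ is central, so $ea=ae$ --- the first of the three conditions in that characterization. Multiplying $a=e+q+exa$ on the left by $1-e$ annihilates the first and third summands (using $(1-e)e=0$), giving $a(1-e)=(1-e)a=(1-e)q$, which is nilpotent in $(1-e)R(1-e)$ because $1-e$ is central and $q$ is nilpotent; this is the third condition. Multiplying on the left by $e$ instead gives $ea=e+eq+exa$, and, using centrality of $e$ to rewrite $exa=(ex)(ea)$, this becomes $(e-ex)(ea)=e+eq$ inside the corner $eR=eRe$. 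The right-hand side is the identity $e$ of $eR$ plus the nilpotent $eq$, hence a unit of $eR$; therefore $ea$ has a left inverse in $eR$.

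The one step that genuinely exploits the hypothesis --- and the point I would be most careful about --- is upgrading this one-sided inverse to a two-sided one. The corner $eR$ is again abelian, because every idempotent of $eR$ is an idempotent of $R$, hence central in $R$ and a fortiori in $eR$; and an abelian ring is Dedekind-finite (if $cb=1$, then $bc$ is a central idempotent with $c(bc)=(cb)c=c$, so $(1-bc)c=0$ and thus $1-bc=(1-bc)cb=0$). Consequently $ea=ae$ is a unit in $eRe$, which is the second condition. Having verified all three conditions of the cited characterization for the idempotent $e$, we conclude that $a$, and hence $R$, is strongly $\pi$-regular. Everything apart from the Dedekind-finiteness observation is routine bookkeeping with a central idempotent.
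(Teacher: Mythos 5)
Your proof is correct and follows essentially the same route as the paper's: the same decomposition $a=e+q+exa$, multiplication by $1-e$ to get the nilpotent part, multiplication by $e$ to get a left inverse of $ae$ in $eRe$, and direct finiteness of the abelian corner to upgrade it to a two-sided inverse. The only difference is that you spell out the (correct) proof that abelian rings are directly finite, which the paper simply asserts.
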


\begin{proof}
We only need to prove the forward direction. Thus, let $R$ be an abelian weakly nil clean ring, and take $a\in R$.
Write
\begin{equation}
\label{strenacba}
a=e+q+exa,
\end{equation}
where $e\in\Id(R)$, $q\in\Nil(R)$ and $x\in R$. Multiplying this equation by $1-e$, we get $a(1-e)=q(1-e)$, which is a nilpotent in $R(1-e)$
since $R$ is abelian. Moreover, multiplying (\ref{strenacba}) by $e$, we get $(1-x)ae=qe+e$ and thus $(1+q)^{-1}(1-x)ae=e$.
Hence $ae$ is left invertible in $Re$. But $Re$ is abelian and hence directly finite. Accordingly, $ae\in\U(Re)$, as desired.
\end{proof}

A ring $R$ is said to have {\it bounded index of nilpotence} (or {\it bounded index} for short), if there exists $n\in\posintegers$ such that $x^n=0$ for every nilpotent $x\in R$.
Azumaya \cite{azumaya} proved that every $\pi$-regular ring with bounded index is strongly $\pi$-regular.
We will extend this result to the wider class of weakly nil clean rings.

\begin{lemma}
\label{boundedindexlema}
Let $R$ be an exchange ring with bounded index, and suppose that every homomorphic image of $R$ has nil Jacobson radical.
Then $R$ is strongly $\pi$-regular.
\end{lemma}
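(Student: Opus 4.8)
The plan is to prove that $R$ is strongly $\pi$-regular by reducing, via Azumaya's theorem (quoted in the introduction: a $\pi$-regular ring of bounded index is strongly $\pi$-regular), to showing that $R$ is $\pi$-regular. Applying the hypothesis to the identity map of $R$ shows first that $J(R)$ is nil. Then $\overline R:=R/J(R)$ is again exchange, is semiprimitive, has bounded index (a nilpotent of $\overline R$ lifts to an element of $R$ a power of which lies in the nil ideal $J(R)$, hence is nilpotent in $R$), and has every homomorphic image with nil Jacobson radical. Since $\pi$-regularity is inherited by $R$ from $R/J(R)$ through the nil ideal $J(R)$ (a standard fact), it will suffice to prove that $\overline R$ is $\pi$-regular — in fact I will aim to show it is von Neumann regular — so I may assume from the start that $J(R)=0$.

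The first main step is to identify the prime factor rings of $R$. Let $P$ be a prime ideal; then $R/P$ is prime and exchange, and $J(R/P)=0$ because $R/P$ is semiprime while $J(R/P)$ is nil by hypothesis. I would show that $R/P$ contains at most $n$ pairwise orthogonal nonzero idempotents, where $n$ is the index bound of $R$: given orthogonal idempotents $\bar e_1,\dots,\bar e_m$ of $R/P$, lift them to orthogonal idempotents $e_1,\dots,e_m\in R$ (possible since $R$ is exchange), choose by primeness of $R/P$ elements $g_i\in e_iRe_{i+1}$ whose images $\bar g_i\in\bar e_i(R/P)\bar e_{i+1}$ satisfy $\bar g_1\cdots\bar g_{m-1}\neq 0$, and observe that $x:=g_1+\dots+g_{m-1}$ satisfies $x^m=0$ while $x^{m-1}=g_1\cdots g_{m-1}\neq 0$, so that $m\le n$. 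Hence $R/P$ is an exchange ring with no infinite family of orthogonal idempotents, therefore semiperfect, and a semiperfect prime ring with zero Jacobson radical is simple Artinian; thus $R/P\cong M_k(D)$ with $k\le n$ for a division ring $D$. In particular every prime factor ring of $R$ is von Neumann regular.

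The remaining step — and the main obstacle — is to conclude that $R$ itself is von Neumann regular: knowing that all prime factors $R/P$ are simple Artinian (equivalently, that $R$ embeds subdirectly in a product of such rings) does not on its own produce a single $r$ with $ara=a$ for a prescribed $a\in R$, and one genuinely needs the hypothesis on homomorphic images here, since the formal power series ring $k[[X]]$ is an exchange, semiprime ring of bounded index which is not regular. Now $R$ is semiprime of bounded index, so it has no nonzero nil ideals (a nil ideal of bounded index is nilpotent, hence zero), and every prime factor of $R$ is regular; from this — either via the Fisher--Snider theory of rings with regular prime factor rings \cite{fishersnider}, or via the structure theory of semiprime exchange rings of bounded index — one deduces that $R$ is von Neumann regular. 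Then $R$ is $\pi$-regular, and Azumaya \cite{azumaya} together with the bounded index hypothesis finishes the proof that $R$ is strongly $\pi$-regular.
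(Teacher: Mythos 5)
Your reduction to prime factor rings, and your analysis of a single prime factor via orthogonal idempotents and the Camillo--Yu semiperfectness criterion, run close to the paper's own argument. But the proof has a genuine gap exactly where you flag ``the main obstacle'': the passage from ``every prime factor of $R$ is regular'' back to a statement about $R$ itself. The Fisher--Snider theorem you invoke says that a ring all of whose prime factor rings are regular is regular if and only if it is semiprime \emph{and} the union of every chain of semiprime ideals is semiprime; you never verify this chain condition, and it cannot simply be dropped --- Fisher and Snider's own counterexample is a semiprime ring with all prime factors regular that is not regular. The alternative you offer (``the structure theory of semiprime exchange rings of bounded index'') is not an argument. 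So the globalization step is missing. The paper closes precisely this gap with a different tool, \cite[Theorem 23.2]{tuganbaev}, which asserts that a ring is strongly $\pi$-regular provided every prime factor ring is strongly $\pi$-regular; with that theorem in hand the whole detour through $J(R)=0$ and von Neumann regularity becomes unnecessary, and one only has to handle the prime factors.

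There is also a secondary gap in the local analysis: for an arbitrary prime $P$ you assert $J(R/P)=0$ ``because $R/P$ is semiprime while $J(R/P)$ is nil.'' A semiprime (even prime) ring can have a nonzero nil ideal --- nil does not imply nilpotent --- so this inference really requires bounded index of $R/P$ (a nil ideal of bounded index is locally nilpotent, hence zero in a semiprime ring), and bounded index is not inherited by arbitrary homomorphic images, since a nilpotent of $R/P$ need not lift to a nilpotent of $R$. This is why the paper first passes to a \emph{minimal} prime $P_0\subseteq P$, for which $R/P_0$ does retain bounded index by \cite[Remark 14.4 (1)]{tuganbaev}, concludes that $R/P_0$ is semisimple Artinian, and only then treats $R/P$ as a homomorphic image of $R/P_0$. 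Your orthogonal-idempotent computation does correctly show that $R/P$ is semiperfect, but the step from there to simple Artinian is unjustified as written.
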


\begin{proof}
Assume that $R$ satisfies the assumptions of the lemma. By \cite[Theorem 23.2]{tuganbaev}, it suffices to prove that every prime factor ring of $R$ is strongly $\pi$-regular.
Thus, let $P$ be a prime ideal of $R$. By Zorn's lemma, we can find a minimal prime ideal $P_0$ of $R$ with the property that $P_0\subseteq P$. By \cite[Remark 14.4 (1)]{tuganbaev},
the ring $R/P_0$ is of bounded index, hence by \cite[Proposition 14.5 (1)]{tuganbaev} and \cite[Corollary 2]{camilloyu}
(noticing that $R/P_0$ is exchange), $R/P_0$ is semiperfect.
Since $R/P_0$ is also semiprime and $J(R/P_0)$ is nil by assumption, \cite[Corollary 14.3 (2)]{tuganbaev} forces $J(R/P_0)=0$.
Thus $R/P_0$ is semisimple Artinian and hence strongly $\pi$-regular. The ring $R/P$, being a homomorphic image of $R/P_0$, is therefore also strongly $\pi$-regular, as desired.
\end{proof}

The following extends Azumaya \cite{azumaya}.

\begin{proposition}
\label{boundedindex}
Every weakly nil clean ring of bounded index is strongly $\pi$-regular.
\end{proposition}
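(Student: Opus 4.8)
The plan is to deduce Proposition \ref{boundedindex} directly from Lemma \ref{boundedindexlema}. By Proposition \ref{osnove}, every weakly nil clean ring is exchange, and the bounded index hypothesis is exactly what Lemma \ref{boundedindexlema} requires, so the only thing left to verify is that every homomorphic image of a weakly nil clean ring $R$ has nil Jacobson radical. But weakly nil clean rings are closed under homomorphic images (again Proposition \ref{osnove}), so it suffices to show that $J(R)$ itself is nil whenever $R$ is weakly nil clean — and this is precisely Proposition \ref{radikal}. Hence the entire argument is a two-line invocation of earlier results.

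More explicitly, I would argue as follows. Let $R$ be a weakly nil clean ring of bounded index. By Proposition \ref{osnove}(i), $R$ is exchange. Let $R/I$ be any homomorphic image of $R$; by Proposition \ref{osnove}(ii) it is again weakly nil clean, so Proposition \ref{radikal} gives that $J(R/I)$ is nil. Thus $R$ satisfies all the hypotheses of Lemma \ref{boundedindexlema}, and we conclude that $R$ is strongly $\pi$-regular.

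I do not expect any genuine obstacle here, since the real content was already packaged into Lemma \ref{boundedindexlema} (which in turn leans on the structure theory in \cite{tuganbaev} and on the Camillo–Yu result \cite{camilloyu} that semiprime exchange rings of bounded index are semiperfect). The only point requiring a moment's care is confirming that ``bounded index of nilpotence'' passes to homomorphic images — but Lemma \ref{boundedindexlema} is stated in a way that handles this internally via minimal primes and \cite[Remark 14.4 (1)]{tuganbaev}, so one does not even need to invoke it at the level of Proposition \ref{boundedindex}. In short, the proposal is: verify the two hypotheses of Lemma \ref{boundedindexlema} for a weakly nil clean ring of bounded index, using Propositions \ref{osnove} and \ref{radikal}, and apply the lemma.

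\begin{proof}
Let $R$ be a weakly nil clean ring of bounded index. By Proposition \ref{osnove} (i), $R$ is exchange. Moreover, by Proposition \ref{osnove} (ii), every homomorphic image of $R$ is again weakly nil clean, hence has nil Jacobson radical by Proposition \ref{radikal}. Thus $R$ satisfies the hypotheses of Lemma \ref{boundedindexlema}, and we conclude that $R$ is strongly $\pi$-regular.
\end{proof}
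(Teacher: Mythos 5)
Your proof is correct and follows exactly the paper's own argument: both verify the hypotheses of Lemma \ref{boundedindexlema} by combining Proposition \ref{osnove} (exchange property and closure under homomorphic images) with Proposition \ref{radikal} (nil Jacobson radical), and then apply the lemma.
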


\begin{proof}
By Proposition \ref{osnove} and Proposition \ref{radikal}, every homomorphic image of a weakly nil clean ring has nil Jacobson radical,
hence the result follows from Lemma \ref{boundedindexlema}.
\end{proof}

Note that, by Proposition \ref{abelovost}, every commutative weakly nil clean ring is strongly $\pi$-regular.
Using Proposition \ref{boundedindex} and Levitzki's result that semiprime PI-rings have bounded index (cf.~\cite[1.6.23, 1.6.26]{rowen}), this can be immediately extended to PI-rings.
Recall that $R$ is said to be a PI-ring if $R$ satisfies a polynomial identity with coefficients in $\integers$ (or equivalently, in the center) and at least one coefficient is invertible
(see \cite{rowen}).

\begin{corollary}
\label{pi}
Every weakly nil clean PI-ring is strongly $\pi$-regular.
\end{corollary}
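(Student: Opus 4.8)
The plan is to deduce Corollary \ref{pi} directly from Proposition \ref{boundedindex} by reducing any weakly nil clean PI-ring to the bounded-index case. First I would let $R$ be a weakly nil clean PI-ring. By Proposition \ref{radikal}, $J(R)$ is nil and $R/J(R)$ is again weakly nil clean; moreover $R/J(R)$ is still a PI-ring, since a polynomial identity (with an invertible coefficient) passes to homomorphic images. So it suffices to show $R/J(R)$ is strongly $\pi$-regular: indeed, strong $\pi$-regularity lifts modulo a nil ideal (this is standard, and in fact for any $a\in R$ some $\overline{a^n}\in\overline{a^{n+1}}(R/J(R))$ gives $a^n-a^{n+1}r\in J(R)$, whence $a^n\in a^{n+1}R + \text{nil}$, and a short argument promotes this to $a^m\in a^{m+1}R$). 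Hence we may assume $J(R)=0$, i.e.\ $R$ is a semiprime PI-ring.

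Next I would invoke Levitzki's theorem as cited in the excerpt (cf.~\cite[1.6.23, 1.6.26]{rowen}): a semiprime PI-ring has bounded index of nilpotence. This is exactly the hypothesis needed to apply Proposition \ref{boundedindex} to $R$, yielding that $R$ is strongly $\pi$-regular. Tracing back through the nil-ideal lifting, the original ring $R$ is strongly $\pi$-regular as well.

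Alternatively — and this is cleaner — I would avoid the explicit passage to $R/J(R)$ and instead verify directly that the hypotheses of Lemma \ref{boundedindexlema} hold for a weakly nil clean PI-ring $R$. By Proposition \ref{osnove}(i) $R$ is exchange, and by Proposition \ref{osnove}(ii) together with Proposition \ref{radikal} every homomorphic image of $R$ is weakly nil clean and hence has nil Jacobson radical. The only missing ingredient is bounded index. For this, note that if $x\in R$ is nilpotent then its image in the semiprime PI-ring $R/J(R)$ is nilpotent, and by Levitzki it is killed by a uniformly bounded power $N$; thus $x^N\in J(R)$, which is nil, so $x$ itself is nilpotent of bounded order. (One does have to be slightly careful that the bound on $x$ in $R$ is uniform, but since $J(R)$ is nil and, being semiprime PI, $R/J(R)$ has index bound $N$ while $J(R)$ has its own bounded nilpotence in this setting, the composite bound works; in any case the cleaner route above via Proposition \ref{boundedindex} applied to $R/J(R)$ sidesteps this entirely.) Then Lemma \ref{boundedindexlema} gives that $R$ is strongly $\pi$-regular.

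The only real point requiring attention — and I expect it to be the main (though minor) obstacle — is the reduction step: Proposition \ref{boundedindex} and Levitzki's theorem together handle \emph{semiprime} PI-rings of bounded index, but a general weakly nil clean PI-ring need not be semiprime. The resolution is precisely that $J(R)$ is nil (Proposition \ref{radikal}), $R/J(R)$ is semiprime and still PI and still weakly nil clean, so Levitzki plus Proposition \ref{boundedindex} apply to $R/J(R)$, and strong $\pi$-regularity then pulls back through the nil ideal $J(R)$. Everything else is a routine citation chain. I would write this up in three or four lines, citing Proposition \ref{radikal}, Levitzki's bound \cite[1.6.23, 1.6.26]{rowen}, and Proposition \ref{boundedindex}, plus a one-sentence remark that strong $\pi$-regularity is preserved under nil extensions.
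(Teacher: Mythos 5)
Your primary argument is essentially the paper's own proof: the paper quotients by the prime radical $N(R)$ rather than $J(R)$, applies Levitzki's theorem plus Proposition \ref{boundedindex} to the resulting semiprime PI-ring, and cites \cite[Theorem 2.3]{fishersnider} for lifting strong $\pi$-regularity back through the nil ideal --- the step you assert as standard. (Your alternative route, proving bounded index for $R$ itself, has exactly the gap you flagged: a nil $J(R)$ need not have bounded nilpotence index, so stay with the first version.)
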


\begin{proof}
Let $R$ be a weakly nil clean PI-ring, and let $N(R)$ denote the prime radical of $R$.
Then the factor ring $R/N(R)$ has bounded index of nilpotence by Levitzki's theorem.
Thus, Proposition \ref{boundedindex} shows that $R/N(R)$ is strongly $\pi$-regular, which in turn forces $R$ to be strongly $\pi$-regular by \cite[Theorem 2.3]{fishersnider}.
\end{proof}

The following provides a partial answer to Diesl's question \cite[Question 4]{diesl} whether every nil clean ring is strongly $\pi$-regular.

\begin{corollary}
Let $R$ be a nil clean ring which is of bounded index or a PI-ring. Then $R$ is strongly $\pi$-regular.\hfill$\qed$
\end{corollary}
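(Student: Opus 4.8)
The plan is to derive this corollary directly from the results already established in this section, together with the elementary observation that every nil clean ring is weakly nil clean. First I would recall why the latter holds: if $R$ is nil clean and $a\in R$, write $a=e+q$ with $e\in\Id(R)$ and $q\in\Nil(R)$; then $a-e-q=0\in eRa$, so $a$ is weakly nil clean in the sense of Definition \ref{definicija}. Hence any nil clean ring $R$ is a weakly nil clean ring, and all the results of this section apply to it.

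Now I would simply invoke the appropriate result according to which hypothesis is assumed. If $R$ has bounded index of nilpotence, then Proposition \ref{boundedindex} applies to the weakly nil clean ring $R$ and yields that $R$ is strongly $\pi$-regular. If instead $R$ is a PI-ring, then Corollary \ref{pi} applies to the weakly nil clean ring $R$ and again yields that $R$ is strongly $\pi$-regular. In either case the conclusion follows, which completes the argument.

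There is essentially no obstacle here: the only ingredient beyond the section's main results is the transparent inclusion of nil clean rings into weakly nil clean rings. The corollary is recorded not for the difficulty of its proof but for its consequence, namely that it settles Diesl's question \cite[Question 4]{diesl} affirmatively in the two important special cases of bounded index and of PI-rings, as announced in the introduction.
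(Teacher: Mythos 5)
Your proof is correct and is exactly the argument the paper intends: the corollary is stated with a \qed because it follows immediately from the observation that nil clean rings are weakly nil clean (Example in Section 2) combined with Proposition \ref{boundedindex} in the bounded-index case and Corollary \ref{pi} in the PI case. No further comment is needed.
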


\begin{remark}
Yu \cite{yu} proved that every exchange ring of bounded index has primitive factors Artinian, and
Menal \cite{menal} proved that every $\pi$-regular ring with primitive factors Artinian is strongly $\pi$-regular.
This raises the question if the `bounded index' assumption in Proposition \ref{boundedindex} can be weakened to `primitive factors Artinian'.
However, the ring $S$ in Example \ref{rowenex} is weakly nil clean with primitive factors Artinian (in fact, every semiprimitive factor of $S$ is Artinian), but $S$ is not $\pi$-regular.

We also remark that, according to \cite{chen} and \cite{huhkimlee}, every exchange ring with primitive factors Artinian is clean. But, in general, such a ring is not weakly nil clean
(consider, for example, local rings with non-nil Jacobson radical).

\end{remark}

If $e$ is an idempotent of a ring $R$, then $eRe$ is a subring of $R$ with identity $e$, called a {\it corner ring} of $R$.
It is easy to see that every corner of a (strongly) $\pi$-regular ring is (strongly) $\pi$-regular.
Thus, it is natural to ask:

\begin{question}
\label{vprkoti}
Is every corner of a weakly nil clean ring weakly nil clean?
In particular, if the matrix ring $M_n(R)$ over a ring $R$ is weakly nil clean, is then $R$ weakly nil clean?
\end{question}

Note that for weakly clean rings (in the sense of \cite{ster3}), the analogous question has a positive answer (cf.~\cite[Proposition 2.5]{ster3} and \cite[Proposition 3.3]{ster}).
On the other hand, for clean rings the answer in general is negative (cf.~\cite{ster} and \cite{ster4}).
As our definition seems to be closer to the $\pi$-regular rings, there is some hope for the above question to have a positive answer.
Unfortunately, the methods used in \cite{ster} for weakly clean rings cannot be applied for the case of weakly nil clean rings.
But we are able to give the following partial result:

\begin{proposition}
\label{kotiabelovost}
Let $R$ be an abelian ring. If $M_n(R)$ is weakly nil clean for some $n\in\posintegers$, then $R$ is weakly nil clean.
\end{proposition}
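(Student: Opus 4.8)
The plan is to combine Proposition \ref{abelovost} with a reduction to a corner ring. Since $R$ is abelian, by Proposition \ref{abelovost} it suffices to show that $R$ is strongly $\pi$-regular, or equivalently — invoking the characterization of strong $\pi$-regularity recalled before Proposition \ref{abelovost} — that for every $a\in R$ there is a central idempotent $e$ with $ae$ a unit in $eRe$ and $a(1-e)$ a nilpotent. Fix $a\in R$ and consider the companion-type matrix, or more simply, consider $a\cdot\Id$ or a suitable matrix $A\in M_n(R)$ built from $a$ whose behaviour encodes that of $a$; the cleanest choice is to work directly with the scalar matrix $A=\diag(a,a,\dots,a)$, or else to exploit that $R\cong eM_n(R)e$ for the idempotent $e=E_{11}$.

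First I would argue that since $R$ is abelian, every idempotent of $M_n(R)$ is conjugate to a diagonal idempotent with entries among the central idempotents of $R$ — this is a standard fact for matrix rings over abelian rings, and it lets us control the idempotent that the weakly nil clean decomposition in $M_n(R)$ produces. Next, apply the weakly nil clean hypothesis in $M_n(R)$ to the scalar matrix $A=a\cdot\Id$: write $A - E - Q \in E\cdot M_n(R)\cdot A$ with $E\in\Id(M_n(R))$ and $Q\in\Nil(M_n(R))$. Using Proposition \ref{prva} is in fact more convenient: there is an idempotent $E\in M_n(R)A$ and a nilpotent $Q$ with $1-E=(1-E)(1+Q)(1-A)$. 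Because $A$ is scalar and $R$ is abelian, I expect that $E$ can be taken to be a scalar idempotent $e\cdot\Id$ for some central idempotent $e\in R$ with $e\in Ra$; the point is that $M_n(R)A = M_n(Ra)$, and the component-wise structure together with centrality of idempotents forces the conjugating unit to be dealt with harmlessly. Then reading off a single diagonal entry (or conjugating back to the corner $E_{11}M_n(R)E_{11}\cong R$) yields an idempotent $e\in Ra$, central in $R$, and a nilpotent $q\in R$ with $1-e=(1-e)(1+q)(1-a)$; multiplying by $1-e$ and using centrality gives $(1-e)a=(1-e)q$, a nilpotent times the central idempotent, hence $a(1-e)$ is nilpotent in $(1-e)R(1-e)$, while the relation restricted to $eRe$ shows $ea$ is a unit there exactly as in the proof of Proposition \ref{abelovost}. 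This gives strong $\pi$-regularity of $R$, hence weak nil cleanness.

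The main obstacle will be the second step: extracting a \emph{scalar} (or at least diagonalizable-with-central-entries) idempotent from the weakly nil clean decomposition of the scalar matrix $A$ in $M_n(R)$. A priori the idempotent $E\in M_n(R)A$ is an arbitrary matrix idempotent, and the nilpotent $Q$ and the unit $1+Q$ can mix the coordinates. The remedy I anticipate is to use that over an abelian ring every idempotent matrix is conjugate (by a unit of $M_n(R)$) to $\diag(e_1,\dots,e_n)$ with $e_i\in\Id(R)$ central, and moreover that if $E\in M_n(R)\cdot(a\Id)=M_n(Ra)$ then each $e_i\in Ra$; combining the conjugation with the relation $1-E=(1-E)(1+Q)(1-A)$ and projecting to the $(i,i)$ entry for a well-chosen $i$ should produce the required single-entry relation. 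Alternatively, one can avoid diagonalization entirely by noting that $R\cong eM_n(R)e$ with $e=E_{11}$ and invoking the fact that $e$ is already central-like enough: the element $eAe=aE_{11}$ sits in the corner, and applying Lemma \ref{mocnalema} in reverse along a chain of corners of $M_n(R)$ peels off $R$. I would try the diagonalization route first, as it keeps the bookkeeping transparent and stays entirely within the elementary toolkit already set up in the paper.
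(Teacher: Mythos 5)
Your overall frame (reduce to a single idempotent--nilpotent relation in $R$ and conclude via Proposition \ref{prva} or Proposition \ref{abelovost}) is reasonable, but the step you yourself flag as ``the main obstacle'' is a genuine gap, and the fact you propose to fill it with is false. Over an abelian ring an idempotent matrix need \emph{not} be conjugate to a diagonal matrix of central idempotents: take a Dedekind domain $D$ with nontrivial class group; a non-principal ideal $I$ satisfies $I\oplus I^{-1}\cong D^2$, giving an idempotent $E\in M_2(D)$ whose image is isomorphic to $I$, and if $E$ were conjugate to $\diag(e_1,e_2)$ with $e_i\in\{0,1\}$ (the only idempotents of a domain), then $I$ would be free, hence principal. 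One could try to rescue this by noting that $R$, as a corner of the exchange ring $M_n(R)$, is an abelian exchange ring, over which finitely generated projectives do decompose into cyclics $eR$ --- but that is a nontrivial theorem you did not invoke, and even granting it you face further problems: conjugating by a unit $U$ destroys the hypothesis $E\in M_n(R)A$ (the entries of $U^{-1}EU$ land in $RaR$, not in $Ra$); the diagonal entries $e_1,\dots,e_n$ need not coincide, so you do not get a scalar idempotent; and you cannot simply ``read off'' the $(i,i)$ entry of $1-E=(1-E)(1+Q)(1-A)$, since taking a diagonal entry is not multiplicative. Your fallback of applying Lemma \ref{mocnalema} ``in reverse'' along a chain of corners is also not available: that lemma passes from a corner up to the full ring, not down.

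The paper avoids all of this with one choice you did not consider: it takes $A=\diag(a,0,\dots,0)$ rather than the scalar matrix. Then $E\in M_n(R)A$ forces $E$ to be supported on the first column, so its $(1,1)$ entry $e$ is automatically an idempotent in $Ra$, with no diagonalization whatsoever. Comparing \emph{first rows} of $1-E=(1-E)(1+Q)(1-A)$ gives $f=f(1+\alpha)(1-a)$ and $f\beta=0$, where $f=1-e$ and $\alpha,\beta$ are the top blocks of $Q$; since $f$ is central, the matrix $\diag(f,\dots,f)Q$ is nilpotent with vanishing upper-right block, whence $q=f\alpha$ is nilpotent in $R$ and $f=f(1+q)(1-a)$ verifies the criterion of Proposition \ref{prva} for $a$. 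I would redo your proof along these lines.
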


\begin{proof}
Suppose that $M_n(R)$ is weakly nil clean. We follow the idea used in the proof of \cite[Proposition 2.2]{ster}.
Let $a\in R$ and define $A=\diag(a,0,\ldots,0)\in M_n(R)$.
By Proposition \ref{prva} we can find an idempotent $E\in M_n(R)A$ and $Q\in\Nil(M_n(R))$ such that $1-E=(1-E)(1+Q)(1-A)$.
The condition $E\in M_n(R)A$ implies that $E$ is of the form
$$E=\left(\begin{array}{cccc}e&&&\\x_1&0&&\\\vdots&&\ddots&\\x_{n-1}&&&0\end{array}\right)$$
where $e=e^2\in Ra$ and $x_i\in Re$. Let $f=1-e$, and write a block decomposition
$Q=\left(\begin{smallmatrix}\alpha&\beta\\\gamma&\delta\end{smallmatrix}\right)$ where $\alpha\in R$, $\delta\in M_{n-1}(R)$,
and $\beta,\gamma$ are blocks of appropriate sizes. Comparing the first rows of $1-E$ and $(1-E)(1+Q)(1-A)$, we get $f=f(1+\alpha)(1-a)$ and $0=f\beta$.
Since $G=\diag(f,f,\ldots,f)\in M_n(R)$ is a central matrix, $GQ=\left(\begin{smallmatrix}f\alpha&0\\f\gamma&f\delta\end{smallmatrix}\right)$ is nilpotent,
and hence $q=f\alpha$ is a nilpotent in $R$. Thus $f=f(1+q)(1-a)$, so that $a$ satisfies the condition of Proposition \ref{prva}.
\end{proof}

\begin{remark}
The matrix ring $M_n(R)$ in Proposition \ref{kotiabelovost} in general need not be (strongly) $\pi$-regular, as Example \ref{rowenex} demonstrates.
\end{remark}

The reverse form of Question \ref{vprkoti} asks the following:

\begin{question}
\label{vprmatrik}
If $e$ is an idempotent of a ring $R$ such that both $eRe$ and $(1-e)R(1-e)$ are weakly nil clean rings, is then $R$ also weakly nil clean?
In particular, if $R$ is a weakly nil clean ring, is then $M_n(R)$ weakly nil clean for every $n$?
\end{question}

The analogous statement holds true for weakly clean rings (cf.~\cite[Proposition 3.3]{ster}), as well as clean rings (cf.~\cite[Lemma on p.~2590]{hannicholson}).
For the class of nil clean rings \cite{diesl}, the question is still open. However, it looks unlikely for the above question to have a positive answer,
since for strongly $\pi$-regular and $\pi$-regular rings, the answer is negative (as Example \ref{rowenex} demonstrates).
Another reason for our pessimism is that, in view of the fact that every nil ring is weakly nil clean,
a positive answer to Question \ref{vprmatrik} would also imply, as a side effect, that the K\"othe conjecture has a positive answer.
However, although finding a counterexample for our question should be easier than that for the K\"othe conjecture,
so far we have not been successful.

Note that, in some cases a positive answer to Question \ref{vprmatrik} can be given by making use of known results.
For example, if $R$ is of bounded index or a PI-ring,
then it is known that the strongly $\pi$-regular property of $R$ and of $M_n(R)$ are equivalent (see \cite[Corollary on p.~256]{tominagayamada}).
In particular, for PI-rings we have the following:

\begin{corollary}
For a PI-ring $R$ and every $n\in\posintegers$, the following are equivalent:
\begin{enumerate}
\item
$R$ is weakly nil clean.
\item
$R$ is $\pi$-regular.
\item
$R$ is strongly $\pi$-regular.
\item
$M_n(R)$ is weakly nil clean.
\item
$M_n(R)$ is $\pi$-regular.
\item
$M_n(R)$ is strongly $\pi$-regular.
\end{enumerate}
\end{corollary}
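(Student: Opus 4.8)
The plan is to establish the cycle of equivalences by combining several results from the excerpt with the cited equivalence for the $\pi$-regular property of matrix rings over PI-rings. First I would observe that for a PI-ring $R$, the implications (iii)$\Rightarrow$(ii)$\Rightarrow$(i) are immediate: strongly $\pi$-regular rings are $\pi$-regular (standard), and $\pi$-regular rings are weakly nil clean by Proposition \ref{piregularni}. The reverse chain (i)$\Rightarrow$(iii) is exactly Corollary \ref{pi}, since a PI-ring that is weakly nil clean is strongly $\pi$-regular. This closes the loop (i)$\Leftrightarrow$(ii)$\Leftrightarrow$(iii) for $R$ itself. Note here that $M_n(R)$ is again a PI-ring whenever $R$ is (this is standard: $M_n(R)$ satisfies the standard identity of degree $2n$ times... more precisely, if $R$ satisfies a polynomial identity then so does $M_n(R)$, cf.~\cite{rowen}), so the same three-way equivalence (iv)$\Leftrightarrow$(v)$\Leftrightarrow$(vi) holds for $M_n(R)$.

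Next I would bridge the two groups. The cleanest route is through the strongly $\pi$-regular property: by \cite[Corollary on p.~256]{tominagayamada}, for a PI-ring $R$ the ring $R$ is strongly $\pi$-regular if and only if $M_n(R)$ is strongly $\pi$-regular. (Alternatively one may invoke \cite{fishersnider}, but the Tominaga--Yamada corollary is the one quoted in the surrounding text.) This gives (iii)$\Leftrightarrow$(vi) directly. Chaining with the two internal triangles established above then yields the full equivalence of all six statements. Concretely I would write: (i)$\Leftrightarrow$(iii) by Proposition \ref{piregularni} and Corollary \ref{pi}; (ii)$\Leftrightarrow$(iii) since $\pi$-regular plus PI implies strongly $\pi$-regular by Corollary \ref{pi} (a $\pi$-regular ring is weakly nil clean) and the converse is trivial; identically (iv)$\Leftrightarrow$(vi) and (v)$\Leftrightarrow$(vi) using that $M_n(R)$ is a PI-ring; and finally (iii)$\Leftrightarrow$(vi) by \cite{tominagayamada}.

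I do not anticipate a genuine obstacle here, as every link is either trivial or a direct citation of something proved or quoted earlier; the only point requiring a word of justification is that $M_n(R)$ inherits the PI-property from $R$, which is classical. The proof is therefore essentially a bookkeeping argument, and I would present it as a short diagram chase:
\begin{equation*}
\text{(i)}\Leftrightarrow\text{(iii)}\Leftrightarrow\text{(ii)},\qquad \text{(iv)}\Leftrightarrow\text{(vi)}\Leftrightarrow\text{(v)},\qquad \text{(iii)}\Leftrightarrow\text{(vi)},
\end{equation*}
from which all six statements are mutually equivalent. If one wanted to avoid relying on \cite{tominagayamada} for the matrix bridge, an alternative is to note that $M_n(R)$ being weakly nil clean forces $R/J(R)$-type quotients to be controlled via Proposition \ref{radikal} and Proposition \ref{kotiabelovost} does not apply without abelianness, so the Tominaga--Yamada route really is the natural one; I would keep the proof as outlined above.
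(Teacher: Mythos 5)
Your argument is correct and follows essentially the same route as the paper: the equivalences (i)--(iii) and (iv)--(vi) come from the earlier results together with the fact that $M_n(R)$ is again a PI-ring, and the bridge (iii)$\Leftrightarrow$(vi) is the Tominaga--Yamada corollary (the paper notes additionally that the direction (vi)$\Rightarrow$(iii) already holds for arbitrary rings via corners, but this is a cosmetic difference). The final paragraph's digression about alternative routes adds nothing and could be dropped.
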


\begin{proof}
We already know that (i)--(iii) are equivalent, and since matrices over a PI-ring are a PI-ring (see, for example, \cite[Exercise 1.8]{rowen}), (iv)--(vi) are also equivalent.
Implication (vi)$\Rightarrow$(iii) holds for every ring, and (iii)$\Rightarrow$(vi) is \cite[Corollary on p.~256]{tominagayamada}.
\end{proof}

We close this section with one more property of weakly nil clean rings.
It is known that the center of every $\pi$-regular ring is (strongly) $\pi$-regular (see \cite[Theorem 1]{mccoy}).
On the other hand, the center of an exchange ring,
as well as the center of a clean ring, need not be exchange (see, for example, \cite[Proposition 2.5]{burgessraphael} and \cite[Example 2.7]{hongkimlee}).
Hence, the center of a weakly clean ring (in the sense of \cite{ster3}) also need not be clean.
But we have the following:

\begin{proposition}
The center of a weakly nil clean ring is weakly nil clean (and thus strongly $\pi$-regular).
\end{proposition}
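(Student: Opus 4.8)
The plan is to prove that $Z(R)$ is strongly $\pi$-regular; weak nil cleanness of $Z(R)$ then follows at once from Proposition~\ref{piregularni}, and the parenthetical assertion is contained in Proposition~\ref{abelovost}. So fix $a\in Z(R)$: I want an idempotent $e$ and an $m\in\posintegers$ with $a^m=a^me$, such that $a(1-e)$ is nilpotent and $ae$ is a unit in the corner ring of $Z(R)$ at $e$. As a starting point I would apply Proposition~\ref{prva} to obtain an idempotent $e\in Ra$ and a nilpotent $q$ with $1-e=(1-e)(1+q)(1-a)$.

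The crux is the following computation, which is the only place where the weakly nil clean hypothesis is genuinely used. Set $f=1-e$, $u=1+q$ and $w=qu^{-1}=u^{-1}q\in\Nil(R)$, and pick $m$ with $q^m=0$, so $w^m=0$. Rearranging $f=fu(1-a)$ gives $fua=fq$, and since $a$ is central one may move it past $u$ and past $f$ to turn this into $(af)u=fq$, i.e.\ $af=fw$; once this holds, $a^kf=fw^k$ for every $k\ge1$ by a routine induction, so $a^mf=fw^m=0$. Hence $a^m=ea^m=a^me$. Now $e\in Ra$, say $e=va$ with $v\in R$; since $a$ is central, $e=av$, so $e=e^m=(av)^m=a^mv^m$, and with $r:=v^m$ we obtain $a^m=a^me=a^{2m}r$, i.e.\ $a$ is $\pi$-regular in $R$, with idempotent $e=a^mr=ra^m$.

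From here one can run the argument McCoy \cite{mccoy} used for the center of a $\pi$-regular ring — which in fact only needs $\pi$-regularity of the single central element $a$. Since $a^m$ is central, $eR=a^mR$ is a two-sided ideal, and $(1-e)R$ equals the two-sided ideal $\{x\in R:xa^m=0\}$ (indeed $(1-e)a^m=0$, while $xa^m=0$ forces $xe=xa^mr=0$, hence $x=x(1-e)$). Thus $R=eR\oplus(1-e)R$ is a direct sum of two-sided ideals, so $e\in Z(R)$. Everything now takes place over the commutative corner $eZ(R)$: the element $a(1-e)$ is nilpotent because $\big(a(1-e)\big)^m=a^m(1-e)=0$, while $(ae)^m=a^m$ is a unit of $eRe$ with inverse $re$ (using $a^mr=e$); as $ae$ is central in $eRe$ so is its inverse $\nu$, and since $\nu\in eR$ and $e\in Z(R)$ this forces $\nu\in Z(R)$, hence $\nu\in eZ(R)$ and $ae$ is a unit of $eZ(R)$. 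Consequently $a^m=(ae)^{m+1}\nu=a^{m+1}\nu$ with $\nu\in Z(R)$, and since $a\in Z(R)$ was arbitrary, $Z(R)$ is strongly $\pi$-regular.

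I expect the real obstacle to be the middle step: deducing $a^m(1-e)=0$ from the weakly nil clean identity, i.e.\ showing that the idempotent produced by weak nil cleanness absorbs a power of the central element $a$. This is what upgrades central elements to honest $\pi$-regular elements of $R$, after which the classical center argument goes through; the identity $af=fw$ and its propagation $a^kf=fw^k$ rely essentially on $a$ being central, and there is no obvious way to push them beyond $Z(R)$.
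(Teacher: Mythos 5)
Your proof is correct, and its core coincides with the paper's: both arguments hinge on extracting, from the weakly nil clean condition applied to a central element $a$, the two facts that $a^m(1-e)=0$ and that $e$ lies in $Ra^m$, and then deducing that $e$ is central. You reach these via the reformulation in Proposition~\ref{prva} (the identity $af=fw$ with $w=qu^{-1}$ and its iteration $a^kf=fw^k$), whereas the paper works directly with $a=e+q+exa$, multiplying by $e$ and by $1-e$; these are equivalent computations. Where you genuinely diverge is in the two remaining steps. For centrality of $e$ you use McCoy's decomposition $R=a^mR\oplus\{x:xa^m=0\}$ into two-sided ideals, while the paper verifies $ey(1-e)=(1-e)ye=0$ by a direct element computation using $e\in Ra^n$; and for the conclusion you prove that every central element satisfies $a^m=a^{m+1}\nu$ with $\nu\in Z(R)$ (so $Z(R)$ is strongly $\pi$-regular) and then recover weak nil cleanness from Proposition~\ref{piregularni}, whereas the paper exhibits the explicit weakly nil clean decomposition $a=e+(1-e)a+e(a-1)$ inside $Z(R)$. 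Your route buys a cleaner logical structure --- it isolates the statement that every central element of a weakly nil clean ring is $\pi$-regular in $R$, and it sidesteps the final verification that $e(a-1)\in eZ(R)a$ --- at the mild cost of invoking Proposition~\ref{piregularni} and the equivalence of strong $\pi$-regularity and $\pi$-regularity for commutative rings rather than producing the decomposition by hand.
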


\begin{proof}
Let $R$ be a weakly nil clean ring. Take $a\in Z(R)$, and let
\begin{equation}
\label{centereqn}
a=e+q+exa
\end{equation}
be a weakly nil clean decomposition of $a$ in $R$, where $e\in\Id(R)$, $q\in\Nil(R)$, and $x\in R$. Multiplying (\ref{centereqn}) by $e$ from the left,
we get $e(1+q)=e(1-x)a$. Therefore, $e=e(1-x)a(1+q)^{-1}=e(1-x)(1+q)^{-1}a\in Ra$,
which also gives that $e\in Ra^k$ for every $k\ge 1$ since $e$ is an idempotent and $a$ is central.
Moreover, multiplying (\ref{centereqn}) by $1-e$ from the left, we get $(1-e)a=(1-e)q$, so that $(1-e)qe=(1-e)ae=0$ and hence $(1-e)q=(1-e)q(1-e)$.
Taking $n\in\posintegers$ with $q^n=0$, we conclude that $(1-e)a^n=((1-e)a)^n=((1-e)q)^n=(1-e)q^n=0$.

Now, take any $y\in R$. Then $ey(1-e)\in Ra^ny(1-e)=Ry(1-e)a^n=0$, and similarly $(1-e)ye\in(1-e)yRa^n=(1-e)a^nyR=0$. This proves the centrality of $e$.
Now we can write
$$a=e+(1-e)a+e(a-1).$$
Since $((1-e)a)^n=0$ and $e\in Ra$, this is a weakly nil clean decomposition of $a$ in $Z(R)$.
\end{proof}

\begin{remark}
The proof of the above proposition also shows that the center of a nil clean ring must be nil clean.
Indeed, if $a=e+q$ is a nil clean decomposition of $a\in Z(R)$, with $e\in\Id(R)$ and $q\in\Nil(R)$,
then the proof shows that $e\in Z(R)$, so that also $q\in Z(R)$, meaning that $a=e+q$ is actually a nil clean decomposition of $a$ in $Z(R)$.
\end{remark}

In \cite{ster3} it is proved that every $\pi$-regular ring is a corner of a clean ring.
The following generalizes this result, and provides a partial answer to \cite[Question 3.10]{ster3},
asking whether every weakly clean ring can be viewed as a corner of some clean ring.

\begin{corollary}
Every weakly nil clean ring is a corner of a clean ring.
\end{corollary}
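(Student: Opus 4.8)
The plan is to reduce everything to the situation of rings with strongly $\pi$-regular center, and then to invoke the construction already present in \cite{ster3}. By the proposition immediately preceding the corollary, the center $C:=Z(R)$ of our weakly nil clean ring $R$ is strongly $\pi$-regular. In particular $C$ is clean (every strongly $\pi$-regular ring is clean, by Burgess--Menal \cite{burgessmenal} and Nicholson \cite{nicholson3}), the Jacobson radical $J(C)$ is nil, and $C/J(C)$ is a commutative von Neumann regular ring. These are exactly the features of the center that one has for a $\pi$-regular ring via McCoy's theorem \cite[Theorem 1]{mccoy}.

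Now recall that \cite{ster3} proves that every $\pi$-regular ring is a corner of a clean ring. I would reinspect that argument and observe that the $\pi$-regularity of the ambient ring is used there \emph{only} to ensure, through McCoy's theorem, that its center is $\pi$-regular; the clean overring is then assembled from that (clean) center together with the algebra structure of the ring over it, and no further use is made of $\pi$-regularity of the whole ring. Consequently the same construction applies verbatim to our $R$: since $C=Z(R)$ is strongly $\pi$-regular, hence $\pi$-regular, it produces a clean ring $S$ and an idempotent $e\in S$ with $eSe\cong R$, which is what is claimed. (If, as I expect, \cite{ster3} already isolates an intermediate statement of the shape ``a ring whose center is $\pi$-regular is a corner of a clean ring,'' then the corollary is an immediate combination of that statement with the preceding proposition and nothing further is required.)

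The main obstacle is precisely the bookkeeping in the previous paragraph: one has to pass through the proof in \cite{ster3} step by step and confirm that each appeal to $\pi$-regularity of the full ring can be replaced by the a priori weaker hypothesis that only the center is ($\pi$-)regular, and in particular that the cleanness of the overring $S$ is not lost under this weakening. It is worth recording that the resulting $S$ need not itself be weakly nil clean (being clean does not imply weakly nil clean, e.g.\ local rings with non-nil Jacobson radical), so this yields only a corner embedding into a clean ring, which is all that is asked — and is consistent with the still-open status of Question \ref{vprkoti} and with the fact, illustrated by Example \ref{rowenex}, that corners and matrix rings do not interact with these properties as smoothly as one might hope.
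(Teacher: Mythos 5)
Your overall strategy is the paper's: use the preceding proposition to get that $Z(R)$ is (strongly) $\pi$-regular, and then feed this into the corner-of-a-clean-ring construction from \cite{ster3}. But there is a genuine gap in how you describe that construction. You claim the $\pi$-regularity of the ambient ring is used in \cite{ster3} \emph{only} to make the center $\pi$-regular, and you float the intermediate statement ``a ring whose center is $\pi$-regular is a corner of a clean ring.'' That statement is false: every corner $eSe$ of a clean (indeed, of any exchange) ring $S$ is an exchange ring, whereas a ring with $\pi$-regular center need not be exchange --- take the free algebra $F\langle x,y\rangle$ over a field $F$, whose center is the field $F$ itself. So the construction in \cite{ster3} cannot run on the centrality hypothesis alone; it also needs an exchange-type hypothesis on $R$ itself. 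Indeed, the relevant result there (\cite[Theorem 3.5]{ster3}) is stated for \emph{weakly clean} rings $R$ together with a suitable central subring $k$, and the weak cleanness of $R$ is an essential input, not a dispensable byproduct of $\pi$-regularity.

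The repair is one line and is exactly what the paper does: by Proposition \ref{osnove}(i), every weakly nil clean ring is weakly clean in the sense of \cite{ster3}, and by the preceding proposition its center is strongly $\pi$-regular; one then applies \cite[Theorem 3.5]{ster3} with $k=Z(R)$. Your ``bookkeeping'' paragraph, had it been carried out, would presumably have surfaced this missing hypothesis, but as written the proposal asserts that no property of the full ring beyond its center is needed, and that assertion is what breaks. The rest of your remarks (that the resulting clean overring need not be weakly nil clean, and the consistency with Question \ref{vprkoti}) are fine but peripheral.
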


\begin{proof}
Every weakly nil clean ring is weakly clean in the sense of \cite{ster3}. Hence we may apply \cite[Theorem 3.5]{ster3} with $k=Z(R)$.
\end{proof}

\section{Uniqueness conditions}

In \cite{nicholsonzhou3}, Nicholson and Zhou introduced a class of clean rings with the property that a clean decomposition of every element
is {\it unique}, i.e., every element $a$ can be written as the sum of an idempotent and a unit in a unique way. These rings are called
{\it uniquely clean}. Some other variations of rings with similar uniqueness conditions were studied throughout the literature.
For example, Diesl \cite{diesl} investigated {\it uniquely nil clean} rings, which can be defined analogously.

Thus, one might ask what happens if one adds an additional uniqueness condition to our initial definition of weakly nil clean rings.
Of course, the uniqueness condition might be required on the idempotent or on the nilpotent, so that we get two different classes of rings;
any of these two might be called {\it uniquely weakly nil clean} rings.
However, the following propositions show that in both cases we get nothing new.

\begin{proposition}
\label{unq1}
For a ring $R$, the following are equivalent:
\begin{enumerate}
\item
For every $a\in R$ there exist a unique idempotent $e\in R$ and a nilpotent $q\in R$ such that $a-e-q\in eRa$.
\item
$R$ is strongly $\pi$-regular with central idempotents.
\end{enumerate}
\end{proposition}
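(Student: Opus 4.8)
The plan is to prove both implications directly, leaning on the structural results already established. For the direction (ii)$\Rightarrow$(i), suppose $R$ is strongly $\pi$-regular with central idempotents. By Proposition \ref{abelovost} the ring is weakly nil clean, so for a given $a$ an idempotent $e$ and nilpotent $q$ with $a-e-q\in eRa$ exist; it remains to show $e$ is unique. The key observation is that with central idempotents the condition $a-e-q\in eRa$ collapses: writing $a-e-q=exa$ and multiplying by $1-e$ gives $(1-e)a=(1-e)q$, so $(1-e)a$ is nilpotent in $(1-e)R(1-e)$; multiplying by $e$ and using that $e$ is central gives $ea=e(e+q+exa)$, i.e.\ $ea-e=e(q+xa)\in eRe$, and a short argument (as in the proof of Proposition \ref{abelovost}, using $Re$ directly finite) shows $ea\in\U(eRe)$. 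Thus $e$ is an idempotent satisfying $ea=ae$, $ae$ invertible in $eRe$, $a(1-e)$ nilpotent in $(1-e)R(1-e)$ — which is exactly the Nicholson characterization of strong $\pi$-regularity quoted before Proposition \ref{abelovost}. The final step is to recall that this idempotent $e$ is \emph{unique}: if $e'$ is another such idempotent, then $ee'$ and $e-ee'$, $e'-ee'$ are central idempotents, and one checks that on $(e-ee')R$ the element $a$ is simultaneously a unit and a nilpotent, forcing $e=ee'=e'$. This gives uniqueness of $e$ in (i).

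For (i)$\Rightarrow$(ii), assume the uniqueness condition. First I would establish that $R$ is weakly nil clean (this is immediate, uniqueness only strengthens the hypothesis), hence exchange by Proposition \ref{osnove}. The crux is to show idempotents are central. For an arbitrary idempotent $f\in R$ and arbitrary $r\in R$, the standard trick with uniqueness conditions is to exhibit two decompositions of some cleverly chosen element and compare them. Consider an element like $a=f$ or $a=1-f$ or $a$ built from $f$ and $fr(1-f)$; for such an element one writes down the obvious decomposition and a perturbed one involving the ``corner'' element $fr(1-f)$ (which is a nilpotent, as $(fr(1-f))^2=0$), and the two idempotents produced must coincide by hypothesis, which should force $fr(1-f)=0$; similarly $(1-f)rf=0$, giving $fr=rf$. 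Once idempotents are known to be central, Proposition \ref{abelovost} immediately yields that $R$ is strongly $\pi$-regular, completing (ii).

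The main obstacle I anticipate is the centrality argument in (i)$\Rightarrow$(ii): one must pick exactly the right element $a$ so that two genuinely distinct decompositions appear unless the relevant corner element vanishes, and care is needed because the nilpotent in the decomposition is \emph{not} required to be unique — only the idempotent is — so the perturbation must change the idempotent, not just the nilpotent. A natural candidate is to take $a = 1 - f + fr(1-f)$ (or $a=f+ frf'$-type expressions with $f'=1-f$): one decomposition uses the idempotent $1-f$, and if $f$ is not central one can produce a second decomposition with a different idempotent, contradiction. A secondary, more routine obstacle is verifying in the (ii)$\Rightarrow$(i) direction that the weakly nil clean idempotent $e$ of $a$ is forced to be precisely the Nicholson idempotent and hence unique; this is a direct-finiteness argument in the corner $Re$ together with the ``unit is never a nonzero nilpotent'' principle, and I expect it to go through without difficulty once the central-idempotent reductions are in place.
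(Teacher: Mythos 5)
Your plan is correct and follows essentially the same route as the paper's proof: for (i)$\Rightarrow$(ii) the paper uses exactly your perturbation trick, comparing the two decompositions $e=e+0+0$ and $e=(e+ex(1-e))+(-ex(1-e))+0$ of an idempotent $e$ (the perturbed element is again an idempotent because $ex(1-e)$ squares to zero), so the obstacle you flagged is a one-line check, after which Proposition \ref{abelovost} finishes. For (ii)$\Rightarrow$(i) the paper likewise shows that any weakly nil clean idempotent of $a$ must coincide with the canonical strongly $\pi$-regular idempotent $g$, via the same ``$a$ cannot be both invertible and nilpotent on a nonzero common corner'' argument (together with direct finiteness of $Re$) that you describe.
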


\begin{proof}
First we prove implication (i)$\Rightarrow$(ii). By Proposition \ref{abelovost} we only need to prove that
idempotents in $R$ are central. But this is obvious, since for every $e\in\Id(R)$ and $x\in R$,
we can write two weakly nil clean decompositions, namely $e=e+0+0$ and $e=(e+ex(1-e))+(-ex(1-e))+0$.
The uniqueness of the idempotent thus forces $ex(1-e)=0$. Similarly we see that $(1-e)xe=0$, and thus $e$ is central, as desired.

To prove the converse, assume that $R$ is strongly $\pi$-regular with central idempotents, and let
\begin{equation}
\label{enacbaunq}
a=e+q+exa
\end{equation}
be any weakly nil clean decomposition of an element $a\in R$, with $e\in\Id(R)$, $q\in\Nil(R)$ and $x\in R$. Since $R$ is strongly $\pi$-regular, we can find
$g\in\Id(R)$ such that $ag\in\U(Rg)$ and $a(1-g)\in\Nil(R(1-g))$. We will prove that $e=g$, which will prove the uniqueness of $e$.
First, we multiply (\ref{enacbaunq}) by $g(1-e)$ to get $ag(1-e)=qg(1-e)$. Since $q$ is a nilpotent and idempotents are central, $ag(1-e)$ is a nilpotent, i.e., $(ag)^n(1-e)=0$ for some $n$.
Since $ag$ is invertible in $Rg$, this forces $g(1-e)=0$. Moreover, multiplying (\ref{enacbaunq}) by $(1-g)e$, we get $a(1-g)e=(1+q)(1-g)e+xa(1-g)e$, which gives
$(1+q)^{-1}(1-x)a(1-g)e=(1-g)e$. Hence $a(1-g)e$ is left invertible in $R(1-g)e$.
On the other hand, $a(1-g)e$ is nilpotent because $a(1-g)$ is so. This forces $(1-g)e=0$. Joining this with $g(1-e)=0$, we get $e=g$, as desired.
\end{proof}

Recall that a ring $R$ is \textit{strongly regular} if for every $a\in R$ there exists $r\in R$ such that $a=a^2r$.
It is well known that a ring is strongly regular if and only if it is abelian regular \cite[Theorem 3.5]{goodearl}.
It is also easy to see that a ring is strongly regular if and only if it is strongly $\pi$-regular and has no nonzero nilpotents.

\begin{proposition}
\label{unq2}
For a ring $R$, the following are equivalent:
\begin{enumerate}
\item
For every $a\in R$ there exist an idempotent $e\in R$ and a unique nilpotent $q\in R$ such that $a-e-q\in eRa$.
\item
$R$ is strongly regular.
\end{enumerate}
\end{proposition}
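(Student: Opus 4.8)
The plan is to prove the equivalence (i)$\Leftrightarrow$(ii) by showing that (i) forces the ring to be strongly $\pi$-regular with no nonzero nilpotents, and conversely that strongly regular rings satisfy the uniqueness condition trivially. For the direction (ii)$\Rightarrow$(i): if $R$ is strongly regular, then $R$ is abelian (idempotents central) and has no nonzero nilpotents, so the only nilpotent available in any weakly nil clean decomposition is $q=0$; since $R$ is strongly regular it is $\pi$-regular, hence weakly nil clean by Proposition \ref{piregularni}, and every weakly nil clean decomposition has $q=0$, giving uniqueness of the nilpotent for free. So the substance is the forward direction.

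For (i)$\Rightarrow$(ii), I would first show that $R$ has no nonzero nilpotents. The key observation is that if $t\in\Nil(R)$ then $t$ itself admits the decomposition $t=0+t+0$ (with $e=0$, so $eRa=0$), so by uniqueness of the nilpotent, $t$ is the \emph{only} nilpotent appearing in a weakly nil clean decomposition of $t$. The strategy is to exhibit a second such decomposition forcing $t=0$. A natural candidate is to use an idempotent coming from a power of $t$ or a related element: for instance, writing $t = e + q + exa$ with a cleverly chosen $e\in Rt$. One approach: since $t^2$ is also nilpotent, consider manipulations that produce $e = 1$ (so $a-e-q\in Ra$ is easy to satisfy), yielding $t = 1 + q + xt$, i.e. $q = t-1-xt$; choosing $x$ so that $q$ is nilpotent and different from $t$ (unless $t=0$) would do it. Alternatively, and probably cleaner: show first that $R$ is weakly nil clean (which it is, being a special case of the defining condition — every element has \emph{some} such decomposition), hence exchange by Proposition \ref{osnove}, and try to leverage that. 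I expect the slickest route is: take $t$ nilpotent, note $1+t$ is a unit, write the standard decomposition $1+t = 1 + 0 + 1\cdot(1-(1+t)^{-1})(1+t)$ from Example (3), but also write $1+t = 1 + t + 0$? — no, $0\in Ra$ only if we can solve it, and indeed $1-1=0\in 1\cdot R\cdot(1+t)$ trivially. So $1+t = 1 + t + 0$ is a valid decomposition with nilpotent part $t$, while the unit decomposition has nilpotent part $0$; uniqueness forces $t=0$. This handles nilpotent-freeness cleanly.

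Once $R$ has no nonzero nilpotents, every weakly nil clean decomposition $a = e+q+exa$ has $q$ nilpotent and hence $q=0$, so uniqueness of $q$ holds automatically — but more importantly, \emph{existence} of the decomposition (guaranteed by (i)) now reads $a = e + exa$ with $e\in Ra$, i.e. $R$ is weakly nil clean with all nilpotent parts zero. I would then argue that a ring which is weakly nil clean and has no nonzero nilpotents is strongly regular. Since $R$ is weakly nil clean it is exchange (Proposition \ref{osnove}); an exchange ring with no nonzero nilpotents is abelian (a standard fact: reduced exchange rings are abelian), hence abelian weakly nil clean, hence strongly $\pi$-regular by Proposition \ref{abelovost}; and a strongly $\pi$-regular ring with no nonzero nilpotents is strongly regular by the remark recalled just before the proposition. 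That completes (i)$\Rightarrow$(ii).

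The main obstacle I anticipate is the nilpotent-freeness step: making sure the two competing decompositions of $t$ (or of $1+t$) genuinely satisfy the membership condition $a-e-q\in eRa$ with the claimed $e$ and $q$, and that they have distinct nilpotent parts unless $t=0$. The $1+t$ trick sidesteps most of the difficulty since one decomposition has $e=1$ and nilpotent part $0$ (membership $0\in 1\cdot R\cdot(1+t)$ being trivial), and the other — which must exist by hypothesis (i) applied to $a=1+t$ — has \emph{some} nilpotent part; but to conclude I actually need a decomposition of $1+t$ with nilpotent part exactly $t$, which requires verifying $1+t - e - t = 1-e \in eR(1+t)$ for a suitable idempotent $e$; taking $e=1$ works. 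After that, the rest is an assembly of results already available in the paper, so no real difficulty remains.
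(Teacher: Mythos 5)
Your proof is correct and follows essentially the same route as the paper's: exhibit two competing decompositions of a single element to force reducedness (you use $1+t=1+0+(1-(1+t)^{-1})(1+t)=1+t+0$; the paper uses $1=1+0+0=1+q+1\cdot(-q)\cdot 1$), and then combine Proposition \ref{abelovost} with the fact that a reduced strongly $\pi$-regular ring is strongly regular. The only difference is the order of steps: the paper first proves idempotents are central via a uniqueness argument borrowed from Proposition \ref{unq1} and only then kills the nilpotents, whereas you deduce abelianness from reducedness --- for which the exchange hypothesis you invoke is not even needed, since every reduced ring is abelian.
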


\begin{proof}
To prove the forward direction, first observe, similarly as in the proof of Proposition \ref{unq1}, that $R$ must have central idempotents, and hence must be
strongly $\pi$-regular by Proposition \ref{abelovost}. Now, taking any $q\in\Nil(R)$, we have two weakly nil clean decompositions of the element $1$, namely
$1=1+0+0$ and $1=1+q+1\cdot(-q)\cdot 1$. The assumption thus forces $q=0$. Hence $R$ is a strongly $\pi$-regular ring without nonzero nilpotents and thus strongly regular.
The converse is obvious because every strongly regular ring is strongly $\pi$-regular and has no nonzero nilpotents.
\end{proof}

\bibliographystyle{abbrv}
\bibliography{GeneralizingPiRegularRings}

\end{document}